\newcommand{\Q}{\mathbb{Q}}
\newcommand{\R}{\mathbb{R}}
\newcommand{\N}{\mathbb{N}}
\newcommand{\dom}{\operatorname{dom}}
\renewcommand{\P}{\mathcal{P}}
\newcommand{\M}{\mathcal{M}}
\newcommand{\halts}{\hspace{-0.1cm}\downarrow}
\newcommand{\one}{\mathbf{1}}
\newcommand{\ol}{\overline}
\newtheorem{theorem}{Theorem}[section]
\newtheorem{lemma}[theorem]{Lemma}
\newtheorem{corollary}[theorem]{Corollary}
\newtheorem{proposition}[theorem]{Proposition}
\theoremstyle{definition}
\newtheorem{definition}[theorem]{Definition}
\newtheorem{question}[theorem]{Question}
\newtheorem{example}[theorem]{Example}
\numberwithin{equation}{section}
\begin{document}
\title{Effective Notions of Weak Convergence of Measures on the Real Line}
\author{Timothy H. McNicholl}
\address{Department of Mathematics\\
Iowa State University\\
Ames, Iowa 50011}
\email{mcnichol@iastate.edu}
\author{Diego A. Rojas}
\address{Department of Mathematics\\
Iowa State University\\
Ames, Iowa 50011}
\email{darojas@iastate.edu}

\begin{abstract}
We establish a framework for the study of the effective theory of weak convergence of measures.  
We define two effective notions of weak convergence of measures on $\R$: one uniform and one non-uniform. 
We show that these notions are equivalent.   By means of this equivalence, we prove an effective version of the Portmanteau Theorem, which consists of multiple equivalent definitions of weak convergence of measures.
\end{abstract}

\maketitle

\section{Introduction}\label{sec:intro}

Beginning with Alan Turing's pioneering work on the computability of real numbers in the mid-1930s \cite{T37}, mathematicians have understood that concepts central to the study of mathematical analysis have analogues in computability theory. 
Computable analysis examines these computable analogues and attempts to recover as much of the classical theory of analysis as possible while working in a framework for computation. 
Current research in the field has investigated the interactions between computability and measure theory, one of the main sub-fields of classical analysis, especially with its deep connections to algorithmic randomness \cite{R16, R18}. 
In particular, one aspect of measure theory in algorithmic randomness lies in the connections between random points and measures defined as limits of sequences of measures under different convergence notions \cite{CL18, H11}.

The most commonly studied notion of convergence for sequences of measures in analysis is \emph{weak convergence}. 
A sequence of finite Borel measures $\{\mu_n\}_{n\in\N}$ on a separable metric space $X$ \emph{weakly converges} to a measure $\mu$ if, for every bounded continuous real-valued function $f$ on $X$, the sequence of real numbers $\lim_n \int_X f\ d\mu_n = \int_X f\ d\mu$. 
The measure $\mu$ in this definition is unique and is called the \emph{weak limit measure}.  Weak convergence is a useful tool in probability.   In particular, weak convergence can be used to demonstrate convergence in distribution and to construct stable distributions \cite{Durrett.2010}.

Weak convergence also underlies current research in computable measure theory and algorithmic randomness.  
In particular, the space $\M(X)$ of finite Borel measures on a computable metric space $X$ forms a computable metric space when equipped with the Prokhorov metric \cite{HR09b}.  The Prokhorov metric was introduced by Prokhorov \cite{P56} in 1956 and metrizes the topology of weak convergence.
Galatolo, Hoyrup, and C. Rojas proved an effective version of the Borel-Cantelli  Lemma by using the computability framework obtained by weak convergence of probability measures \cite{GHR09}. 
A few years later, Binder, Braverman, C. Rojas, and Yampolsky used this framework to show that a Brolin-Lyubich probability measure is always computable \cite{BBRY11}. 
In algorithmic randomness, G\'{a}cs developed an approach to uniform randomness tests using the properties of $\P(X)$ (the space of Borel probability measures on $X$) equipped with the weak convergence topology \cite{G05}. 

As shown in Example \ref{ex:not.ewc} below, a weakly convergent sequence of measures, even if computable, need not have a computable limit.  
This possibility has been noted before.  
More specifically, the First Computability Obstruction (Proposition 1.1.3, \cite{H14}) states that any sequence of measures generated by iterating a computable measure through a cellular automaton converges weakly to a $\Delta^0_2$-computable measure.  In fact, it is a fairly easy consequence of our results that any sequence of measures that is computable and weakly convergent has a $\Delta^0_2$ limit.
The conditions under which the weak limit of a computable sequence of measures is computable appear not to have been investigated.  We are thus led to the following.
\begin{question}
What is a suitable definition of effective weak convergence?
\end{question}

We propose two answers to this question, one of which is uniform (Definition \ref{def:uew}), and one of which is not (Definition \ref{def:ew}).  At first glance, the uniform version is much stronger.  In particular, the uniform condition automatically implies weak convergence, whereas the non-uniform version does not.  However, our first main result (Theorem \ref{thm:ewc.equiv}) is that in fact the two definitions are equivalent.  

We then provide evidence that our definition of effective weak convergence in $\M(\R)$ is the appropriate computable analogue to classical weak convergence in $\M(\R)$. 
In particular, our other main result is an effective version of the Portmanteau Theorem, a 1941 theorem due to Alexandroff \cite{A41} that characterizes weak convergence of measures, for $\M(\R)$. 

This paper is divided as follows. 
Section 2 consists of the necessary background in both classical and computable analysis and measure theory. 
Section 3 covers some preliminary material used in latter sections to state and prove the main results of this paper.
In Section 4, we define effective notions of weak convergence in $\M(\R)$, and we show that they are equivalent for uniformly computable sequences in $\M(\R)$. 
In Section 5, we prove an effective version of the Portmanteau Theorem for $\M(\R)$. 
We conclude in Section 6 with a discussion on the results and implications for future research in this direction.

\section{Background}\label{sec:back}

\subsection{Background from classical analysis and measure theory}

We denote the set of all bounded continuous functions on $\R$ by $C_b(\R)$.  
We denote the indicator function of a set $E$ by $\one_E$.

Fix a measure $\mu\in\M(\R)$, and let $A \subseteq \R$.   $A$ is said to be a \emph{$\mu$-continuity set} if $\mu(\partial A)=0$. 

Below, we state a version of the classical Portmanteau Theorem. 
Although there are as many as ten equivalent definitions of weak convergence (see \cite{LKN16}), we will focus on five.

\begin{theorem}[Classical Portmanteau Theorem, \cite{A41}]\label{thm:cpt}
Let $\{\mu_n\}_{n\in\N}$ be a sequence in $\M(\R)$. The following are equivalent.
	\begin{enumerate}
    		\item $\{\mu_n\}_{n\in\N}$ weakly converges to $\mu$.
    		\item For every uniformly continuous $f\in C_b(\R)$,
    		\[\hspace{0.5in}
    		\lim_{n\rightarrow\infty}\int_{\R}fd\mu_n=\int_{\R}fd\mu.
    		\]
    		\item For every closed $C\subseteq\R$, 
			\[\hspace{0.5in}
			\limsup_{n\rightarrow\infty}\mu_n(C)\leq\mu(C).
			\]
    		\item For every open $U\subseteq\R$, 
			\[\hspace{0.5in}
			\liminf_{n\rightarrow\infty}\mu_n(U)\geq\mu(U).
			\]
    		\item For every $\mu$-continuity set $A\subseteq\R$, 
			\[\hspace{0.5in}
			\lim_{n\rightarrow\infty}\mu_n(A)=\mu(A).
			\]
	\end{enumerate}
\end{theorem}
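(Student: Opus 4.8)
The plan is to prove the equivalence by a cycle of implications, $(1)\Rightarrow(2)\Rightarrow(3)\Rightarrow(4)\Rightarrow(5)\Rightarrow(1)$, whose engine throughout is the one-sided approximation of indicator functions of closed and open sets by Lipschitz (hence uniformly continuous and bounded) functions. The implication $(1)\Rightarrow(2)$ is immediate, since every uniformly continuous bounded function already lies in $C_b(\R)$, so testing against a smaller class cannot fail.

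For $(2)\Rightarrow(3)$, given a closed set $C$ and $\epsilon>0$, I would introduce the Lipschitz function $f_\epsilon(x)=\max\{0,\,1-\epsilon^{-1}d(x,C)\}$, where $d(x,C)$ denotes the distance from $x$ to $C$. Each $f_\epsilon$ is uniformly continuous and satisfies $\one_C\le f_\epsilon\le 1$, with $f_\epsilon$ decreasing pointwise to $\one_C$ as $\epsilon\to0^{+}$ precisely because $C$ is closed. Hence $\limsup_n\mu_n(C)\le\lim_n\int_\R f_\epsilon\,d\mu_n=\int_\R f_\epsilon\,d\mu$, and letting $\epsilon\to0^{+}$ with dominated convergence gives $\limsup_n\mu_n(C)\le\mu(C)$. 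The symmetric construction $g_\epsilon(x)=\min\{1,\,\epsilon^{-1}d(x,\R\setminus U)\}$, which approximates $\one_U$ from below for an open set $U$, yields the open-set bound directly and is what links conditions $(3)$ and $(4)$.

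The passage $(3)\Rightarrow(4)$ I would handle by complementation: for open $U$ the set $C=\R\setminus U$ is closed and $\mu_n(U)=\mu_n(\R)-\mu_n(C)$, so taking $\liminf$ and applying $(3)$ produces the desired inequality, provided the total masses converge, $\mu_n(\R)\to\mu(\R)$. This is the one delicate point in the finite-measure (rather than probability-measure) setting, and I would flag it explicitly: total-mass convergence is automatically encoded in $(1)$ and $(2)$ by using the constant test function $f\equiv 1$, and in $(5)$ by taking the $\mu$-continuity set $A=\R$ (whose boundary is empty), so I would carry this observation through the cycle to keep complementation legitimate. For $(4)\Rightarrow(5)$, given a $\mu$-continuity set $A$ I would sandwich it as $\Int A\subseteq A\subseteq\ol A$; since $\mu(\partial A)=0$ we have $\mu(\Int A)=\mu(A)=\mu(\ol A)$, and combining the open-set bound on $\Int A$ with the closed-set bound on $\ol A$ squeezes $\lim_n\mu_n(A)=\mu(A)$.

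The main obstacle is the closing implication $(5)\Rightarrow(1)$. Here I would use the layer-cake representation: after an affine normalization (writing a bounded continuous $f$ as a constant plus a nonnegative function and using the already-established total-mass convergence to absorb the constant term), it suffices to treat $0\le f\le M$, where $\int_\R f\,d\mu_n=\int_0^{M}\mu_n(\{f>t\})\,dt$. The crucial observation is that $\partial\{f>t\}\subseteq\{f=t\}$ and that the set of levels $t$ with $\mu(\{f=t\})>0$ is at most countable, hence Lebesgue-null; thus for almost every $t$ the superlevel set $\{f>t\}$ is a $\mu$-continuity set, and $(5)$ gives $\mu_n(\{f>t\})\to\mu(\{f>t\})$ for almost every $t$. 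Since these integrands are uniformly bounded by $\sup_n\mu_n(\R)$, which is finite by the total-mass convergence noted above, the bounded convergence theorem permits passing the limit inside the $dt$-integral and yields $\int_\R f\,d\mu_n\to\int_\R f\,d\mu$. I expect the care required to justify the measurability and uniform boundedness needed for this interchange, together with the affine normalization in the finite-measure setting, to be the most technically demanding part of the argument.
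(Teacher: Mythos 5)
The paper does not actually prove Theorem \ref{thm:cpt}; it quotes it as classical background with a citation to Alexandroff, so there is no in-paper argument to compare against. Your proposal is the standard textbook proof --- one-sided Lipschitz approximation of indicator functions, complementation, sandwiching a continuity set between its interior and closure, and the level-set/layer-cake argument for $(5)\Rightarrow(1)$ --- and each individual step you describe is sound as far as it goes. The observation that $\partial\{f>t\}\subseteq\{f=t\}$, that only countably many levels can carry positive $\mu$-mass, and that bounded convergence then applies, is exactly right; it is also the same level-set mechanism the paper itself exploits later in Lemma \ref{lm:mod.int}.

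The one substantive problem is precisely the total-mass issue you flag, but your proposed repair does not actually close it. For finite (non-probability) measures, conditions (3) and (4) \emph{as literally stated} are strictly weaker than (1): taking $\mu_n\equiv 0$ and $\mu=\delta_0$ satisfies (3) for every closed set but violates (1), and taking $\mu_n=2\delta_0$ and $\mu=\delta_0$ satisfies (4) for every open set but violates (1). So the literal statement over $\M(\R)$ is false and admits no proof. Your plan to ``carry total-mass convergence through the cycle'' means that in the step $(3)\Rightarrow(4)$ you are secretly invoking (2); what your cycle then establishes is the equivalence of (1), (2), (5) together with the versions of (3) and (4) augmented by the hypothesis $\mu_n(\R)\to\mu(\R)$. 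That is the correct form of the Portmanteau theorem for finite measures (and collapses to the quoted form on $\P(\R)$), but it is a slightly different statement from the one given. You should either build $\mu_n(\R)\to\mu(\R)$ explicitly into (3) and (4), or restrict to probability measures; otherwise the step $(3)\Rightarrow(4)$ in your cycle is not an implication from (3) alone and the chain does not prove the stated equivalence.
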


\subsection{Background from computable analysis and computable measure theory}

We presume familiarity with the fundamentals of computability theory as expounded in \cite{Cooper.2004}.  
For a more expansive treatment of computable analysis, see \cite{Weihrauch.2000}.

We say that a bounded interval $I\subseteq\R$ is \emph{rational} if each of its endpoints is rational.  Fix an effective enumeration $\{I_i\}_{i\in\N}$ of the set of all rational open intervals. 

If $I$ is compact, let $P_{\Q}(I)$ denote the space of polygonal functions on $I$ with rational vertices; we will refer to these functions as \emph{rational polygonal functions} on $I$.
When $p\in P_{\Q}[a,b]$,  we extend $p$ to all of $\R$ by letting $p(x)=p(a)$ for $x<a$ and $p(x)=p(b)$ for $x>b$.

An open set $U\subseteq\R$ is $\Sigma^0_1$ if $\{i\in\N:I_i\subseteq U\}$ is c.e.. 
Similarly, a closed set $C\subseteq\R$ is $\Pi^0_1$ if $\{i\in\N:I_i\cap C=\emptyset\}$ is c.e..
We denote the set of $\Sigma^0_1$ subsets of $\R$ by $\Sigma^0_1(\R)$, and we denote the set of $\Pi^0_1$ subsets of $\R$ by $\Pi^0_1(\R)$.
We say that $e \in \N$ \emph{indexes} $U \in \Sigma_1^0(\R)$ if $e$ indexes $\{i \in \N\ :\ I_i \subseteq U\}$.  
Indices of sets in $\Pi^0_1(\R)$ are defined analogously.  A pair $(U,V)$ of $\Sigma_1^0$ sets is indexed by an $e \in \N$ if $U$ is indexed by $(e)_0$ and $V$ is indexed by $(e)_1$.

Fix a real number $x$.  A \emph{Cauchy name} of $x$ is a sequence $\{q_n\}_{n \in \N}$ of rational numbers so that 
$\lim_n q_n = x$ and so that $|q_n - q_{n+1}| < 2^{-n}$ for all $n \in \N$.  

When $I\subseteq\R$ is a compact rational interval and $J\subseteq\R$ is a rational open interval, we let $N_{I,J}=\{f\in C(\R):f[I]\subseteq J\}$. 
A \emph{compact-open (c.o.)} name of a function $f\in C(\R)$ is an enumeration of $\{N_{I,J}:f\in N_{I,J}\}$.  
If $f \in C(\R)$, then $f$ is computable if and only $f$ has a computable $c.o.$ name.

Each of the names we have just discussed can be represented as a point in $\Sigma^\omega$ for a sufficiently large alphabet $\Sigma$.

Fix $x \in \R$.  $x$ is \emph{computable} if it has a computable Cauchy name.  An index of such a name is also said to be an index of $f$.  $x$ is \emph{left-c.e.} (\emph{right-c.e.}) if its left (right) Dedekind cut is c.e..  It follows that $x$ is computable if and only if it is left-c.e. and right-c.e..  

A sequence $\{x_n\}_{n \in \N}$ is computable if $x_n$ is computable uniformly in $n$.

A function $f:\subseteq\R\rightarrow\R$ is \emph{computable} if there is a Turing functional $F$ so that
$F(\rho)$ is a Cauchy name of $f(x)$ whenever $\rho$ is a Cauchy name of $x$.  An index of such a functional $F$ is also said to be an index of $f$.  We denote the set of all bounded computable functions on $\R$ by $C_b^c(\R)$.

A function $f: \subseteq \R \rightarrow \R$ is \emph{lower semi-computable} if there is a Turing functional $F$ so that 
$F(\rho)$ enumerates the left Dedekind cut of $f(x)$ whenever $\rho$ is a Cauchy name of $x$.
A function $f: \subseteq \R \rightarrow \R$ is \emph{upper semi-computable} if $-f$ is lower semi-computable.

A function $F:\subseteq C(\R)\rightarrow\R$ is \emph{computable} if there is a Turing functional $\Phi$ 
so that $\Phi(\rho)$ is a Cauchy name of $F(f)$ whenever $\rho$ is a c.o.-name of $f$.  
An index of such a functional $\Phi$ is also said to be an index of $F$.

Suppose $\{a_n\}_{n \in \N}$ is a convergent sequence of reals, and let $a = \lim_n a_n$.  A \emph{modulus of convergence} of $\{a_n\}_{n \in \N}$ is a function $g : \N \rightarrow \N$ so that $|a_n - a| < 2^{-k}$ whenever $k \geq g(n)$.  

A measure $\mu\in\M(\R)$ is \emph{computable} if $\mu(\R)$ is a computable real and $\mu(U)$ is left-c.e. uniformly in an index of $U\in\Sigma^0_1(\R)$;  i.e. it is possible to compute an index of the left Dedekind cut of $\mu(U)$ from an index of $U$.  A sequence of measures $\{\mu_n\}_{n\in\N}$ in $\M(\R)$ is \emph{uniformly computable} if $\mu_n$ is a computable measure uniformly in $n$.

Suppose $\mu \in \M(\R)$ is computable.  A pair $(U,V)$ of $\Sigma^0_1$ subsets of $\R$ is \emph{$\mu$-almost decidable} if $U\cap V=\emptyset$, $\mu(U\cup V)=\mu(\R)$, and $\ol{U\cup V}=\R$.  
If, in addition, $U \subseteq A \subseteq \R - V$, then we say that $(U,V)$ is a $\mu$-almost decidable pair of $A$.  
We then say $A$ is \emph{$\mu$-almost decidable} if it has a $\mu$-almost decidable pair. 
Suppose $(U,V)$ is a $\mu$-almost decidable pair of $A$. Then, $e$ indexes $A$ if $e=\left<i,j\right>$ for some index $i$ of $U$ and some index $j$ of $V$.  We note that $\mu$-almost decidable sets are effective analogues of $\mu$-continuity sets.  The definition of $\mu$-almost decidable set is from \cite{R18}.

\section{Preliminaries}\label{sec:prelim}

In order to formulate an effective Portmanteau Theorem, we require an effective definition of $\limsup$ and $\liminf$. 
The following definitions provide such an effectivization.

\begin{definition}\label{def:wit.lwr.upr}
Suppose $\{a_n\}_{n \in \N}$ is a sequence of reals, and let $g : \subseteq \Q \rightarrow \N$.
\begin{enumerate}
	\item We say $g$ \emph{witnesses that $\liminf_n a_n$ is not smaller than $a$} if $\dom(g)$ is the left Dedekind cut of $a$ and if $r < a_n$ whenever $r \in \dom(g)$ and $n \geq g(r)$.
	
	\item We say $g$ \emph{witnesses that $\limsup_n a_n$ is not larger than $a$} if $\dom(g)$ is the right Dedekind cut of $a$ and if $r > a_n$ whenever $r \in \dom(g)$ and $n \geq g(r)$.
\end{enumerate}
\end{definition}

We observe that $\liminf_n a_n \leq a$ if and only if there is a witness that $\liminf_n a_n$ is not smaller than $a$.   
Similarly, $\limsup_n a_n \leq a$ if and only if there is a witness that $\limsup_n a_n$ is not larger than $a$. 

The following proposition effectivizes the fact that for any sequence of reals $\{a_n\}_{n\in\N}$, $\limsup_n a_n\leq a \leq \liminf_n a_n$ implies $\lim_n a_n=a$. 

\begin{proposition}\label{prop:wit.2.mod}
Suppose there is a computable witness that $\liminf_n a_n$ is not smaller than $a$, and suppose there is a computable
witness that $\limsup_n a_n$ is not larger than $a$.  Then, $\lim_n a_n = a$, and $\{a_n\}_{n \in \N}$ has a computable modulus of convergence.
\end{proposition}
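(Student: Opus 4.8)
The plan is to first unpack the two witnesses and read off what they supply. Write $g_1$ for the computable witness that $\liminf_n a_n$ is not smaller than $a$ and $g_2$ for the computable witness that $\limsup_n a_n$ is not larger than $a$. By Definition \ref{def:wit.lwr.upr}, $\dom(g_1)$ is exactly the left Dedekind cut $\{r \in \Q : r < a\}$, and for each $r \in \dom(g_1)$ we have $r < a_n$ for all $n \geq g_1(r)$; dually, $\dom(g_2)$ is exactly the right cut $\{r \in \Q : r > a\}$, and $r > a_n$ for all $n \geq g_2(r)$. Since $g_1$ and $g_2$ are computable partial functions, both of these cuts are c.e.; in particular $a$ is simultaneously left-c.e. and right-c.e., hence computable, though we will not need that fact directly.

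For the convergence claim I would run the standard squeeze argument. Given $\varepsilon > 0$, density of $\Q$ yields rationals $r_1, r_2$ with $a - \varepsilon < r_1 < a < r_2 < a + \varepsilon$, so that $r_1 \in \dom(g_1)$ and $r_2 \in \dom(g_2)$. For every $n \geq \max\{g_1(r_1), g_2(r_2)\}$ the witness properties give $r_1 < a_n$ and $a_n < r_2$, whence $|a_n - a| < \varepsilon$. As $\varepsilon$ was arbitrary, $\lim_n a_n = a$.

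To produce a computable modulus I would make this argument effective. Given $k \in \N$, dovetail the (c.e.) enumerations of $\dom(g_1)$ and $\dom(g_2)$ and search for a pair $(r_1, r_2) \in \dom(g_1) \times \dom(g_2)$ with $r_2 - r_1 < 2^{-k}$. Such a pair exists, since any $r_1 \in (a - 2^{-k-1}, a)$ and $r_2 \in (a, a + 2^{-k-1})$ work and must eventually be enumerated, so the search halts. Having found $r_1, r_2$, the values $g_1(r_1)$ and $g_2(r_2)$ are defined, because we only ever test arguments drawn from the respective domains, so we may compute $M(k) = \max\{g_1(r_1), g_2(r_2)\}$. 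For every $n \geq M(k)$ we then have $r_1 < a_n < r_2$, and since $r_1 < a < r_2$ with $r_2 - r_1 < 2^{-k}$ this yields $|a_n - a| < 2^{-k}$. Thus $M$ is a computable modulus of convergence of $\{a_n\}_{n \in \N}$.

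The analytic content is routine; the only point that requires care is the effectivity of locating the bracketing rationals $r_1, r_2$. The observation that unlocks this is that the two witnesses' domains are \emph{precisely} the left and right Dedekind cuts of $a$ and are therefore c.e., so the bracketing rationals can be found by an unbounded but provably terminating search, and feeding them back into $g_1$ and $g_2$ is guaranteed to halt. I expect this effectivity bookkeeping—ensuring each function call lands on an argument in its domain and that the dovetailed search terminates—to be the main (and rather mild) obstacle, rather than any difficulty in the convergence estimate itself.
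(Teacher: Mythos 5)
Your proof is correct and follows essentially the same route as the paper's: both establish $\lim_n a_n = a$ by squeezing, then build the modulus by locating rationals $r_1 < a < r_2$ with $r_2 - r_1 < 2^{-k}$ and taking $\max\{g_1(r_1), g_2(r_2)\}$. The paper phrases the search for $r_1, r_2$ as an appeal to the computability of $a$ (its cuts being c.e.), which is exactly the dovetailed enumeration you spell out.
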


\begin{proof}
We first note that $\limsup_n a_n \leq a \leq \liminf_n a_n$ and so $\lim_n a_n= a$.  We also note that $a$ is computable since its left and right Dedekind cuts are c.e..

Let $g_1$ be a computable witness that $\liminf_n a_n$ is not smaller than $a$, and let 
$g_2$ be a computable witness that $\limsup_n a_n$ is not larger than $a$.  Define $g : \N \rightarrow \N$ as follows.
Let $k \in \N$.  Since $a$ is computable, we can compute $r_1, r_2 \in \Q$ so that $r_1 < a < r_2$ and $r_2 - r_1 < 2^{-k}$.  Let $g(k) = \max\{g_1(r_1), g_2(r_2)\}$.  It follows from Definition \ref{def:wit.lwr.upr} that $|a_n - a| < 2^{-k}$ when $n \geq g(k)$.
\end{proof}

We note that the proof of Proposition \ref{prop:wit.2.mod} is uniform.

Corollary 4.3.1. in \cite{HR09b} gives us a way to characterize computable measures in terms of their integrals. 
The following proposition is a useful extension of this characterization.

\begin{proposition}\label{prop:comp.equiv}
For $\mu\in\M(\R)$, the following are equivalent.
\begin{enumerate}
	\item $\mu$ is computable.\label{prop:comp.equiv::mc}
	
	\item $f\mapsto\int_{\R}f\ d\mu$ is computable on nonnegative $f\in C^c_b(\R)$.  That is, from an index of an $f \in C^c_b(\R)$ it is possible to compute an index of $\int_\R f\ d\mu$.  \label{prop:comp.equiv::ic}
	
	\item $f\mapsto\int_{\R}f\ d\mu$ is computable on uniformly continuous $f\in C^c_b(\R)$ such that $0\leq f\leq1$.
	\label{prop:comp.equiv::iucc}
\end{enumerate}
\end{proposition}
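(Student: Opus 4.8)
The plan is to prove the three statements equivalent by establishing the cycle $(\ref{prop:comp.equiv::mc}) \Rightarrow (\ref{prop:comp.equiv::ic}) \Rightarrow (\ref{prop:comp.equiv::iucc}) \Rightarrow (\ref{prop:comp.equiv::mc})$. The implication $(\ref{prop:comp.equiv::ic}) \Rightarrow (\ref{prop:comp.equiv::iucc})$ is immediate, since the class of uniformly continuous $f \in C^c_b(\R)$ with $0 \leq f \leq 1$ is a subclass of the nonnegative functions in $C^c_b(\R)$, and restricting a uniform procedure to a subclass of its inputs preserves uniform computability. The remaining two implications carry the real content, and I expect $(\ref{prop:comp.equiv::iucc}) \Rightarrow (\ref{prop:comp.equiv::mc})$ to be the main obstacle.

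For $(\ref{prop:comp.equiv::mc}) \Rightarrow (\ref{prop:comp.equiv::ic})$, I would invoke the cited characterization from Corollary 4.3.1 of \cite{HR09b}, which already relates computability of $\mu$ to the computability of the integration functional on some standard class of test functions. The task is then to bootstrap from whatever class that corollary supplies to all nonnegative $f \in C^c_b(\R)$. The natural route is approximation: given a nonnegative computable bounded $f$ with an index, I would uniformly produce rational polygonal functions $p_k$ converging to $f$ with an effective modulus (exploiting that a c.o.-name of $f$ together with a known bound lets one compute local moduli of continuity on compact intervals), and then control the integral $\int_\R f\,d\mu$ by $\int_\R p_k\,d\mu$ using the finiteness and computability of the total mass $\mu(\R)$ to bound the tail error $\int_{|x|>M} f\,d\mu \leq \|f\|_\infty\,\mu(\{|x|>M\})$. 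Computability of $\mu(\R)$ and left-c.e.-ness of $\mu$ on $\Sigma^0_1$ sets let one make this tail small effectively, yielding an index for $\int_\R f\,d\mu$ uniformly in an index of $f$.

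The crux is $(\ref{prop:comp.equiv::iucc}) \Rightarrow (\ref{prop:comp.equiv::mc})$, where one must recover full computability of $\mu$ from integration data on the small class of uniformly continuous functions valued in $[0,1]$. To show $\mu$ is computable I must (a) compute $\mu(\R)$ as a computable real, and (b) show $\mu(U)$ is left-c.e. uniformly in an index of $U \in \Sigma^0_1(\R)$. For a $\Sigma^0_1$ set $U = \bigcup_i I_{n_i}$ enumerated from its index, $\mu(U) = \sup_k \mu\!\left(\bigcup_{i<k} I_{n_i}\right)$, so it suffices to show each $\mu$ of a finite union of rational open intervals is left-c.e. uniformly. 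The standard device is to approximate $\one_U$ from below by uniformly continuous functions: for any rational open interval $J=(a,b)$ and any rational $\delta>0$, one builds a uniformly continuous $0 \leq f \leq 1$ supported in $J$ that equals $1$ on $[a+\delta,b-\delta]$ and is a trapezoidal ramp in between, so that $\int_\R f\,d\mu \leq \mu(J)$ and these integrals increase to $\mu(J)$ as $\delta \to 0$. Taking maxima of finitely many such functions (again uniformly continuous and in $[0,1]$) approximates finite unions from inside, exhibiting $\mu(U)$ as a supremum of a computable nondecreasing sequence of computable reals, hence left-c.e. uniformly in the index of $U$. For the total mass, I would use functions $f_M$ that are $1$ on $[-M,M]$ and ramp to $0$ outside; then $\int_\R f_M\,d\mu$ is computable and increases to $\mu(\R)$, giving $\mu(\R)$ as left-c.e. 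The hard part is securing right-c.e.-ness of $\mu(\R)$ so that it is genuinely computable: here I would bound $\mu(\R)$ from above using $1 - \int_\R (1-f_M)\,d\mu$ together with the fact that $1-f_M$ is also a legitimate test function of the prescribed form, so that controlling the complementary integrals pins $\mu(\R)$ from both sides and yields a computable value. Once $\mu(\R)$ is computable and $\mu(U)$ is uniformly left-c.e., $\mu$ is computable by definition, closing the cycle.
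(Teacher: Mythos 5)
Your overall architecture matches the paper's: (2)~$\Rightarrow$~(3) by restriction, (1)~$\Rightarrow$~(2) by citing Corollary 4.3.1 of \cite{HR09b}, and (3)~$\Rightarrow$~(1) by approximating indicators of finite unions of rational intervals from below by continuous $[0,1]$-valued ramps and exhibiting $\mu(U)$ as a uniformly left-c.e.\ supremum. That last step is exactly the paper's argument (it uses a nondecreasing sequence of computable Lipschitz functions converging pointwise to $\one_{U_n}$ via Proposition C.7 of \cite{G05} plus the Monotone Convergence Theorem), so the substance of your hardest implication is sound.

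One concrete slip: your treatment of $\mu(\R)$. The quantity $1 - \int_\R (1-f_M)\,d\mu$ equals $1 - \mu(\R) + \int_\R f_M\,d\mu$ and is an upper bound for $\mu(\R)$ only when $\mu(\R) \leq 1$; the proposition is stated for arbitrary finite Borel measures, so this formula does not pin down the total mass. The repair is immediate and is what the paper does: the constant function $x \mapsto 1$ is itself uniformly continuous, bounded, computable, and $[0,1]$-valued, so hypothesis (3) directly gives $\mu(\R) = \int_\R 1\,d\mu$ as a computable real in one step --- no ramps $f_M$, no left/right c.e.\ bookkeeping. (Alternatively, your own observation that $1 - f_M$ is a legitimate test function already yields $\mu(\R) = \int_\R f_M\,d\mu + \int_\R (1-f_M)\,d\mu$ with both summands computable.) A smaller remark: for (1)~$\Rightarrow$~(2) the paper simply cites the corollary with no bootstrapping, so your proposed polygonal-approximation-plus-tail-bound argument there is extra work you may not need, though it is not wrong.
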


\begin{proof}
Clearly (\ref{prop:comp.equiv::ic}) implies (\ref{prop:comp.equiv::iucc}), and (\ref{prop:comp.equiv::mc}) implies (\ref{prop:comp.equiv::ic}) by Corollary 4.3.1 in \cite{HR09b}.

We have left to show that (\ref{prop:comp.equiv::iucc}) implies (\ref{prop:comp.equiv::mc}). 
To this end, suppose $f\mapsto\int_{\R}fd\mu$ is computable on uniformly continuous $f\in C^c_b(\R)$ such that $0\leq f\leq1$. 
Note that $\mu(\R)$ is a computable real since the constant function $x\mapsto1$ is computable and uniformly continuous. 

Now, fix $U\in\Sigma^0_1(\R)$.
It suffices to show that $\mu(U)$ is the limit of a non-decreasing sequence of left-c.e. reals uniformly in an index of $U$. 
Let $U_n=\bigcup\{I_i: 0\leq i\leq n\ \wedge\ I_i \subseteq U\}$. 
We note that, since $U$ is open, $U= \bigcup_n U_n$.  The sequence $\{\mu(U_n)\}_{n \in \N}$ is non-decreasing, and by continuity of measure, $\lim_n \mu(U_n) = \mu(U)$.  Thus, it suffices to show that $\mu(U_n)$ is left-c.e. uniformly in $n$ and an index of $U$.
To this end, we observe that for each $n\in\N$, 
\[
\one_{U_n}=\max\{\one_{I_i}:0\leq i\leq n, I_i\subseteq U\}
\] 
is nonnegative and lower-semicomputable uniformly in $n$. 
Thus, we can compute for each $n\in\N$ a sequence of computable Lipschitz functions $0\leq t_{n,k}\leq1$ such that $t_{n,k}$ increases to $\one_{U_n}$ pointwise as $k\rightarrow\infty$ (see Proposition C.7, \cite{G05}).
By the Monotone Convergence Theorem, $\mu(U_n)=\lim_{k\rightarrow\infty}\int_{\R}t_{n,k}\mbox{ }d\mu$.  
Since $\int_\R t_{n,k} d\mu$ is computable uniformly in $n,k$, $\mu(U_n)$ is left-c.e. uniformly in $n$ and an index of $U$.  
\end{proof}

\section{Effectivizing Weak Convergence of Measures on $\R$}

Throughout this section, let $\{\mu_n\}_{n\in\N}$ be a sequence in $\M(\R)$, and fix $\mu\in\M(\R)$.

To devise an effective notion of weak convergence for $\{\mu_n\}_{n\in\N}$, we need to impose a computability condition on the convergence of the sequence $\{\int_{\R}fd\mu_n\}_{n\in\N}$ with $f\in C_b(\R)$.  Our first attempt is the following.

\begin{definition}\label{def:ew}
We say $\{\mu_n\}_{n\in\N}$ \emph{effectively weakly converges} to $\mu$ if for every $f \in C_b^c(\R)$, 
$\lim_n \int_\R f\ d\mu_n = \int_\R f\ d\mu$ and it is possible to compute an index of a modulus of convergence for 
$\{\int_\R f\ d\mu_n\}_{n \in \N}$ from an index of $f$ and a bound $B \in \N$ on $|f|$. 
\end{definition}

Definition \ref{def:ew} at first glance seems reasonable.  However, as it only produces moduli of convergence for computable functions, it does not automatically imply weak convergence.  One generally expects that the 
effective version of a classical notion to imply the classical notion.  Thus, we are led to also consider the following.

\begin{definition}\label{def:uew}
We say $\{\mu_n\}_{n\in\N}$ \emph{uniformly effectively weakly converges} to $\mu$ if it weakly converges to $\mu$ 
and there is a uniform procedure that for any $f \in C_b(\R)$ computes a modulus of convergence for 
$\{\int_\R f\ d\mu_n\}_{n \in \N}$ from a c.o.-name of $f$ and a bound $B \in \N$ on $|f|$.
\end{definition}

While effective weak convergence requires computable real-valued functions, uniform effective weak convergence may use \emph{any} real-valued function. As such, a modulus of uniform effective weak convergence is able to compute with incomputable information. Hence, uniformly effective weak convergence is a stronger convergence notion than effective weak convergence.  In particular, since every function in $C_b(\R)$ has a c.o. name, it is automatic that every uniformly effectively weakly convergent sequence weakly converges.  
On the other hand, since it avoids the use of names, Definition \ref{def:ew} seems easier to work with in practice.  
Fortunately, we have the following.

\begin{theorem}\label{thm:ewc.equiv}
Suppose $\{\mu_n\}_{n\in\N}$ is uniformly computable. The following are equivalent.
	\begin{enumerate}
		\item $\{\mu_n\}_{n\in\N}$ is effectively weakly convergent.
		\item $\{\mu_n\}_{n\in\N}$ is uniformly effectively weakly convergent.
	\end{enumerate}
\end{theorem}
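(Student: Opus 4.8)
My plan is to establish the two implications separately, treating (2) $\Rightarrow$ (1) as routine and concentrating the effort on (1) $\Rightarrow$ (2). The implication from uniform effective weak convergence to effective weak convergence is essentially immediate: given an index of some $f \in C_b^c(\R)$ together with a bound $B$ on $|f|$, I would first compute an index of a computable c.o.-name of $f$ — this conversion is effective, since a computable function has a computable c.o.-name uniformly in an index — and then feed that name and $B$ into the uniform procedure supplied by Definition \ref{def:uew} to obtain a modulus of convergence. Since uniform effective weak convergence includes weak convergence, the limit of $\{\int_\R f\, d\mu_n\}_{n\in\N}$ is $\int_\R f\, d\mu$, so all requirements of Definition \ref{def:ew} are met.

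The substance is the converse. The first step I would take is to observe that hypothesis (1), together with uniform computability of $\{\mu_n\}_{n\in\N}$, already forces $\mu$ to be computable: for computable $f$ the reals $\int_\R f\, d\mu_n$ are computable uniformly in $n$ by Proposition \ref{prop:comp.equiv} (reducing to the nonnegative case by adding a constant), and effective weak convergence supplies a computable modulus for their convergence, so $\int_\R f\, d\mu = \lim_n \int_\R f\, d\mu_n$ is computable uniformly in an index of $f$; Proposition \ref{prop:comp.equiv} then gives computability of $\mu$. Applying the same modulus to the constant function $1$ yields a computable bound $C \geq \sup_n \mu_n(\R)$.

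The conceptual core is an effective tightness estimate. For each $M$ let $h_M \in C_b^c(\R)$ be the plateau function that equals $1$ on $[-M,M]$, vanishes off $[-M-1,M+1]$, and is piecewise linear in between; these are computable and bounded by $1$ uniformly in $M$, and satisfy $h_M \le h_{M+1}$. Since $\int_\R h_M\, d\mu$ increases to the \emph{computable} value $\mu(\R)$, this monotone convergence is effective, so I can compute $M(k)$ with $\int_\R(1 - h_M)\, d\mu < 2^{-k}$ for $M = M(k)$. Applying hypothesis (1) to $h_M$ (uniformly in $M$, since an index of $h_M$ is computable from $M$) and combining with the modulus for $\mu_n(\R) \to \mu(\R)$ yields a computable modulus for $\int_\R(1-h_M)\, d\mu_n \to \int_\R(1-h_M)\, d\mu$, uniformly in $M$. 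Hence, given $k$, I can compute $M$ and $N$ so that $\int_\R(1-h_M)\, d\mu < 2^{-k}$ and $\int_\R(1-h_M)\, d\mu_n < 2^{-k}$ for all $n \geq N$; this is the effective tail control that converts the non-uniform hypothesis into uniform information.

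Finally I would assemble the modulus. Given a c.o.-name of an arbitrary $f \in C_b(\R)$, a bound $B$, and a target $2^{-k}$, I select $M$ from the tightness step and set $g = f\cdot h_M$, a bounded continuous function supported on $[-M-1,M+1]$ whose c.o.-name is computable from that of $f$. From this name I extract a rational polygonal $\tilde g$ with $\sup_{\R}|g - \tilde g| < \delta$; then $\tilde g \in C_b^c(\R)$, and hypothesis (1) provides a computable modulus for $\int_\R \tilde g\, d\mu_n \to \int_\R \tilde g\, d\mu$. Writing $f = g + f(1 - h_M)$ and estimating
\[
\left| \int_\R f\, d\mu_n - \int_\R f\, d\mu \right| \leq \left| \int_\R \tilde g\, d\mu_n - \int_\R \tilde g\, d\mu \right| + \delta(C + \mu(\R)) + B\!\int_\R(1-h_M)\, d\mu_n + B\!\int_\R(1-h_M)\, d\mu,
\]
I choose $\delta$, $M$, and $N$ so that each of the four terms is below $2^{-k-2}$ for $n \geq N$. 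This produces a modulus of convergence uniformly from the c.o.-name and $B$, and since it witnesses $\int_\R f\, d\mu_n \to \int_\R f\, d\mu$ for every $f \in C_b(\R)$, it simultaneously certifies weak convergence; together these give uniform effective weak convergence. The main obstacle I anticipate is the extraction of the uniform rational polygonal approximation $\tilde g$ from a c.o.-name — the bookkeeping needed to read off uniformly accurate values on a compact interval by searching the name for a suitable finite family of basic neighborhoods $N_{I,J}$ — while the effective tightness estimate is the step carrying the real conceptual weight.
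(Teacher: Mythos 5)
Your proposal is correct and follows essentially the same route as the paper: both implications are handled the same way, and your effective tightness step and polygonal approximation on a compact set correspond directly to the paper's Lemma \ref{lm:big.a} and Lemma \ref{lm:comp.psi}, followed by the same triangle-inequality assembly. The only differences are cosmetic --- you multiply $f$ by a cutoff $h_M$ and use a global computable bound on $\sup_n \mu_n(\R)$, where the paper restricts $f$ to $[-a,a]$ and bounds $\mu_n([-a,a])$ via a tent function.
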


Before proving Theorem \ref{thm:ewc.equiv}, we prove a few preliminary results.  The first demonstrates one of the desired properties of effective weak convergence.  

\begin{proposition}\label{prop:uew}
If $\{\mu_n\}_{n\in\N}$ is uniformly computable and effectively weakly converges to $\mu$, then $\mu$ is a computable measure.
\end{proposition}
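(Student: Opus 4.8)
The plan is to verify condition (\ref{prop:comp.equiv::iucc}) of Proposition \ref{prop:comp.equiv}, namely that $f \mapsto \int_\R f\ d\mu$ is computable on uniformly continuous $f \in C_b^c(\R)$ satisfying $0 \le f \le 1$; by that proposition this suffices to conclude that $\mu$ is computable. So I would fix such an $f$, presented by an index, and note that $B = 1$ is a bound on $|f|$.

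The idea is to combine two computations. First, since $\{\mu_n\}_{n \in \N}$ is uniformly computable and $f$ is a nonnegative member of $C_b^c(\R)$, Proposition \ref{prop:comp.equiv} applied to each $\mu_n$ shows that $\int_\R f\ d\mu_n$ is computable uniformly in $n$ and an index of $f$. Second, since the sequence effectively weakly converges to $\mu$, Definition \ref{def:ew} lets me compute---from the index of $f$ and the bound $B = 1$---an index of a modulus of convergence $g$ for $\{\int_\R f\ d\mu_n\}_{n \in \N}$, whose limit is exactly $\int_\R f\ d\mu$.

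To approximate $\int_\R f\ d\mu$ to within $2^{-k}$, I would set $N = g(k+1)$, so that $|\int_\R f\ d\mu_N - \int_\R f\ d\mu| < 2^{-(k+1)}$, and then use the first computation to produce a rational within $2^{-(k+1)}$ of $\int_\R f\ d\mu_N$; the triangle inequality then gives a rational within $2^{-k}$ of $\int_\R f\ d\mu$. Since every step is uniform in $k$ and in the index of $f$, this yields a Cauchy name of $\int_\R f\ d\mu$ computably from an index of $f$, which is precisely condition (\ref{prop:comp.equiv::iucc}).

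I expect no substantial obstacle here; the entire content of the argument is the reduction to Proposition \ref{prop:comp.equiv} together with the elementary fact, essentially Proposition \ref{prop:wit.2.mod}, that a limit is computable once one has both a uniformly computable approximating sequence and a computable modulus of convergence. The only point demanding care is the uniformity bookkeeping: I must ensure that both the modulus $g$ and the approximations to $\int_\R f\ d\mu_n$ are obtained from an index of $f$ alone, so that the resulting operator on functions is genuinely computable and not merely pointwise computable.
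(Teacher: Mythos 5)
Your proof is correct and follows essentially the same route as the paper's: reduce to Proposition \ref{prop:comp.equiv}, observe that $\{\int_\R f\ d\mu_n\}_{n\in\N}$ is a computable sequence of reals by uniform computability of the measures, obtain a computable modulus of convergence from effective weak convergence, and conclude that the limit $\int_\R f\ d\mu$ is computable uniformly in an index of $f$. The only (immaterial) differences are that you verify clause (\ref{prop:comp.equiv::iucc}) rather than clause (\ref{prop:comp.equiv::ic}) of Proposition \ref{prop:comp.equiv}, and you spell out the triangle-inequality argument that the paper delegates to a citation of Weihrauch.
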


\begin{proof}
By Proposition \ref{prop:comp.equiv}, it suffices to show that $f\mapsto\int_{\R}fd\mu$ is computable on the set of nonnegative $f\in C_b^c(\R)$.  Let $f \in C^c_b(\R)$.  Then, $\{\int_{\R}f\ d\mu_n\}_{n\in\N}$ is a computable sequence of reals since $\{\mu_n\}_{n\in\N}$ is uniformly computable.  
Since $\{\mu_n\}_{n \in \N}$ effectively weakly converges to $\mu$, $\lim_n \int_\R f\ d\mu_n = \int_\R f\ d\mu$ and it is possible to compute a modulus of convergence for 
$\{\int_{\R}f\ d\mu_n\}_{n\in\N}$ from an index of $f$.  By Theorem 4.2.3 in \cite{Weihrauch.2000}, $\int_\R f\ d\mu$ is computable, and it is possible to compute an index of $\int_\R f\ d\mu$ from an index of $f$.
\end{proof}

Next, we need the following lemmas.

\begin{lemma}\label{lm:big.a}
Suppose $\{\mu_n\}_{n \in \N}$ is uniformly computable and effectively weakly converges to $\mu$.  From $N \in \N$, it is possible to compute $a,n_0 \in \N$ so that $\mu_n(\R \setminus [-a,a]) < 2^{-N}$ for all $n \geq n_0$.
\end{lemma}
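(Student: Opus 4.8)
The plan is to effectivize the classical fact that a single finite Borel measure on $\R$ is tight, and then transfer the resulting tail bound from $\mu$ to the $\mu_n$ by means of the modulus supplied by effective weak convergence. Since $\{\mu_n\}_{n\in\N}$ is uniformly computable and effectively weakly converges to $\mu$, Proposition \ref{prop:uew} guarantees that $\mu$ is a computable measure, and hence by Proposition \ref{prop:comp.equiv} the functional $f \mapsto \int_\R f\ d\mu$ is computable on nonnegative $f \in C_b^c(\R)$.

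First I would introduce a family of computable test functions that detect tail mass. For each integer $a \geq 1$, let $h_a$ be the rational polygonal function with $h_a(x) = 0$ for $|x| \leq a$, $h_a(x) = 1$ for $|x| \geq a+1$, and $h_a$ affine on each of the two intervening intervals (extended to $\R$ by the convention for rational polygonal functions). Then $0 \leq h_a \leq 1$, each $h_a$ lies in $C_b^c(\R)$ uniformly in $a$, and because $\one_{\R \setminus [-(a+1),a+1]} \leq h_a \leq \one_{\R \setminus [-a,a]}$ we obtain, for every $n$,
\[
\mu_n(\R \setminus [-(a+1), a+1]) \;\leq\; \int_\R h_a\ d\mu_n \;\leq\; \mu_n(\R \setminus [-a,a]),
\]
and likewise with $\mu_n$ replaced by $\mu$.

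Next I would locate a suitable $a$ by an effective search. Since $\mu$ is finite and $[-a,a] \uparrow \R$, continuity of measure gives $\int_\R h_a\ d\mu \to 0$ as $a \to \infty$. As $\int_\R h_a\ d\mu$ is a computable real uniformly in $a$, I compute for each $a$ a rational $q_a$ within $2^{-N-3}$ of it and search $a = 1, 2, 3, \dots$ until $q_a < 2^{-N-2}$; the search halts because the true values tend to $0$, and it yields an $a$ with $\int_\R h_a\ d\mu < 2^{-N-1}$. Finally, applying effective weak convergence to $h_a$ with the bound $1$ on $|h_a|$, I compute a modulus of convergence $m$ for $\{\int_\R h_a\ d\mu_n\}_{n\in\N}$ and set $n_0 = m(N+1)$, so that for all $n \geq n_0$,
\[
\mu_n(\R \setminus [-(a+1),a+1]) \leq \int_\R h_a\ d\mu_n < \int_\R h_a\ d\mu + 2^{-N-1} < 2^{-N}.
\]
Outputting $a+1$ and $n_0$ then proves the lemma.

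The main obstacle is the middle step, namely arranging that the search for $a$ genuinely terminates and runs uniformly in $N$. This relies on two ingredients at once: the classical convergence $\int_\R h_a\ d\mu \to 0$ (tightness of the single measure $\mu$), which guarantees that a good $a$ exists, and the computability of $\int_\R h_a\ d\mu$ (from Propositions \ref{prop:uew} and \ref{prop:comp.equiv}), which lets us recognize such an $a$. Once $a$ is fixed, the polygonal construction and the transfer through the modulus of convergence are routine.
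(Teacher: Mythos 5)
Your proof is correct and follows essentially the same route as the paper's: the paper uses a trapezoidal test function $w_{a,0}$ satisfying $\one_{\R\setminus[-(a+1),a+1]} \leq w_{a,0} \leq \one_{\R\setminus[-a,a]}$, searches for a good $a$ using the computability of $\mu$ (via Proposition \ref{prop:uew}), and then transfers the bound to the $\mu_n$ through the modulus of convergence, exactly as you do with $h_a$. Your handling of the error tolerances in the search is in fact slightly more explicit than the paper's.
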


\begin{proof}
First, define the sequence $\{w_{a,k}\}_{k\in\N}$ of functions on $\R$ by
\[
w_{a,k}(x)=\one_{\R\setminus[-a,a]}(x)+(2^k(x-a)+1)\one_{[a-2^{-k},a]}(x)+(-2^k(x+a)+1)\one_{[-a,-a+2^{-k}]}(x).
\]
The graph of $w_{a,k}$ is shown in Figure \ref{fig:wak}.
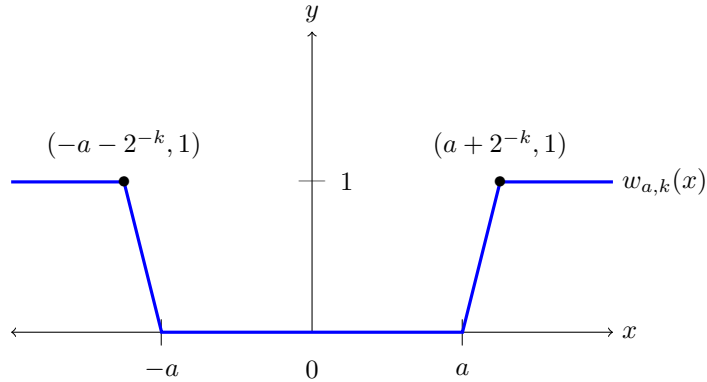
\begin{figure}[h!]
    \centering
    \begin{tikzpicture}
    \draw[<->] (-4,0) -- (4,0) node[right] {$x$};
    \draw[->] (0,0) -- (0,4) node[above] {$y$};
    \node at (0,-0.5) {$0$};
    \node at (-2,-0.5) {$-a$};
    \node at (2,-0.5) {$a$};
    \node at (-2,0) {$|$};
    \node at (2,0) {$|$};
    \node at (0,2) {---};
    \node[below,right] at (0.25,2) {$1$};
    \node at (-2.5,2.5) {$(-a-2^{-k},1)$};
    \node at (2.5,2.5) {$(a+2^{-k},1)$};
    \draw[-, very thick, blue] (-4,2) -- (-2.5,2) -- (-2,0) -- (2,0) -- (2.5,2) -- (4,2);
    \node at (-2.5,2) {\textbullet};
    \node at (2.5,2) {\textbullet};
    \node[right] at (4,2) {$w_{a,k}(x)$};
    \end{tikzpicture}
    \caption{The graph of $y=w_{a,k}(x)$ for fixed $a,k\in\N$. }
    \label{fig:wak}
\end{figure}

We note that for all $a \in \N$, 
\[
\one_{\R \setminus [-(a + 1), a + 1]} \leq w_{a,0} \leq \one_{\R \setminus [-a,a]}.
\]
Thus, for all $\nu \in \M(\R)$ and $a \in \N$, 
\[
\nu(\R \setminus [-(a + 1), a + 1])  \leq \int_\R w_{a,0}\ d\nu \leq \nu(\R \setminus [-a,a]).
\]
We first search for $a'$ so that $\int_\R w_{a',0}\ d\mu < 2^{-N}$.  Since $\mu$ is finite, it follows that 
this search must terminate.  Since $\mu$ is computable, this search is effective.  Set $a = a' + 1$.  Since 
$\{\mu_n\}_{n \in \N}$ effectively weakly converges to $\mu$, we can now compute an $n_0 \in \N$ so that 
$\int_\R w_{a',0} d\mu_n < 2^{-N}$ for all $n \geq n_0$.  Thus, 
$\nu_n(\R \setminus [-a,a]) < 2^{-N}$ for all $n \geq n_0$.
\end{proof}

\begin{lemma}\label{lm:comp.psi}
Suppose $\{\mu_n\}_{n \in \N}$ is uniformly computable and effectively weakly converges to $\mu$.  From 
a c.o.-name of an $f \in C_b(\R)$ and $N,B \in \N$ so that $|f| \leq B$, it is possible to compute $a,n_1 \in \N$ and $\psi \in P_\Q[-a,a]$ so that $|\int_\R (f - \psi) d\mu| < 2^{-N}$ and so that $|\int_\R (f - \psi) d\mu_n| < 2^{-N}$ whenever $n \geq n_1$.
\end{lemma}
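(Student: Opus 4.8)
The plan is to reduce integration of the arbitrary $f \in C_b(\R)$ to integration of a rational polygonal function, by splitting each integral $\int_\R (f-\psi)\,d\nu$ into a contribution over a large compact interval $[-a,a]$ and a tail contribution over $\R\setminus[-a,a]$, and then forcing each of the two contributions below $2^{-N-1}$. First I would record a computable bound on the total masses: applying the hypothesis of effective weak convergence (Definition \ref{def:ew}) to the constant function $1 \in C_b^c(\R)$ yields a computable modulus of convergence for $\{\mu_n(\R)\}_{n\in\N}$ toward $\mu(\R)$; since $\mu$ is computable by Proposition \ref{prop:uew}, so is $\mu(\R)$, and from the modulus I can compute $n^*\in\N$ with $\mu_n(\R)\le \mu(\R)+1 =: M$ for all $n\ge n^*$. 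I then fix rational precision parameters, computable from $N$, $B$, $M$: a rational $\delta\in(0,1)$ with $\delta M < 2^{-N-1}$, and an $N'\in\N$ with $(2B+1)2^{-N'} < 2^{-N-1}$.

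Next I would control the tails and build $\psi$. By Lemma \ref{lm:big.a} applied to $N'$ I can compute $a,n_0\in\N$ with $\mu_n(\R\setminus[-a,a]) < 2^{-N'}$ for all $n\ge n_0$; enlarging $a$ if necessary (which only decreases tails) and using that $\mu$ is computable, I may also arrange $\mu(\R\setminus[-a,a]) < 2^{-N'}$, for instance by searching for $a$ with $\int_\R w_{a,0}\,d\mu < 2^{-N'}$ exactly as in the proof of Lemma \ref{lm:big.a}. With $a$ fixed, I would use the c.o.-name of $f$ to compute $\psi \in P_\Q[-a,a]$ with $\|f-\psi\|_{\infty,[-a,a]} < \delta$; this is the standard fact that a c.o.-name of $f$, a compact rational interval, and a rational precision together yield a rational polygonal approximant within that precision, obtained by computing a modulus of uniform continuity for $f$ on $[-a,a]$ and rational approximations of the values of $f$ at the nodes of a sufficiently fine partition. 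Since $|f|\le B$ and $\delta < 1$, the constant extension of $\psi$ to $\R$ satisfies $|\psi|\le B+1$ everywhere.

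Finally I would assemble the estimate. Set $n_1 = \max(n_0,n^*)$, and let $\nu$ be either $\mu$ or any $\mu_n$ with $n\ge n_1$. Splitting the integral gives
\begin{equation*}
\left| \int_\R (f-\psi)\,d\nu \right| \le \int_{[-a,a]} |f-\psi|\,d\nu + \int_{\R\setminus[-a,a]} |f-\psi|\,d\nu \le \delta\,\nu(\R) + (2B+1)\,\nu(\R\setminus[-a,a]),
\end{equation*}
where on the tail I used $|f-\psi|\le |f|+|\psi|\le 2B+1$. For such $\nu$ we have $\nu(\R)\le M$ (trivially for $\nu=\mu$, and by choice of $n^*$ for $\mu_n$) and $\nu(\R\setminus[-a,a]) < 2^{-N'}$, so the right-hand side is at most $\delta M + (2B+1)2^{-N'} < 2^{-N-1}+2^{-N-1} = 2^{-N}$. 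This yields both required inequalities, and outputting $a$, $n_1$, and $\psi$ completes the construction; every step is effective in the c.o.-name of $f$ and in $N,B$, with the parameters produced in the order $M$, then $\delta$ and $N'$, then $a$, then $\psi$, then $n_1$, so that each depends only on data already computed.

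I expect the main obstacle to be the effective polygonal approximation step, namely extracting from a (possibly noncomputable) c.o.-name of $f$ a rational polygonal $\psi$ with a \emph{guaranteed} bound $\|f-\psi\|_{\infty,[-a,a]} < \delta$; this rests on the standard correspondence between c.o.-names and computability on compact intervals, which lets one compute a modulus of uniform continuity for $f$ together with rational approximations to its values. Additional care is needed to keep the constant extension of $\psi$ bounded by $B+1$, so that the tail term is governed by the computable constant $2B+1$ rather than by an uncontrolled value of $\psi$ at infinity; the remaining work is the routine bookkeeping of the parameter choices described above.
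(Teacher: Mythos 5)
Your proof is correct and follows essentially the same route as the paper's: control the tails via Lemma \ref{lm:big.a}, approximate $f$ by a rational polygonal function on $[-a,a]$ using the c.o.-name, and split each integral into a compact part and a tail part. The only cosmetic difference is that you bound the masses $\nu([-a,a])$ by applying effective weak convergence to the constant function $1$ (getting $M=\mu(\R)+1$), whereas the paper applies it to a tent function $T$ supported near $[-a,a]$; both yield the computable upper bound needed to calibrate the polygonal approximation error.
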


\begin{proof}
By Lemma \ref{lm:big.a}, we can first compute $a, n_0 \in \N$ so that $\mu(\R \setminus [-a,a]) < B^{-1}2^{-(N+2)}$ and 
$\mu_n(\R \setminus [-a,a]) < B^{-1}2^{-(N+2)}$ for all $n \geq n_0$.

Define the function $T$ on $\R$ by
\[
T(x)=\one_{[-a,a]}(x)+(x+a+1)\one_{(-a-1,-a)}(x)+(-x+a+1)\one_{(a,a+1)}(x).
\]
The graph of $T$ is shown in Figure \ref{fig:tent}.
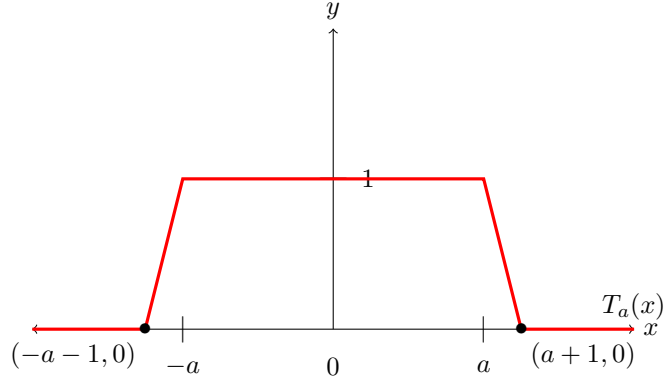
\begin{figure}[h!]
    \centering
    \begin{tikzpicture}
    \draw[<->] (-4,0) -- (4,0) node[right] {$x$};
    \draw[->] (0,0) -- (0,4) node[above] {$y$};
    \node at (0,-0.5) {$0$};
    \node at (-2,-0.5) {$-a$};
    \node at (2,-0.5) {$a$};
    \node at (-2,0) {$|$};
    \node at (2,0) {$|$};
    \node at (0,2) {---};
    \node[below,right] at (0.25,2) {$1$};
    \node[left] at (-2.5,-0.35) {$(-a-1,0)$};
    \node[right] at (2.5,-0.35) {$(a+1,0)$};
    \draw[-, very thick, red] (-4,0) -- (-2.5,0) -- (-2,2) -- (2,2) -- (2.5,0) -- (4,0);
    \node at (-2.5,0) {\textbullet};
    \node at (2.5,0) {\textbullet};
    \node[above] at (4,0) {$T_a(x)$};
    \end{tikzpicture}
    \caption{The graph of $y=T_a(x)$ for fixed $a\in\N$. }
    \label{fig:tent}
\end{figure}
Thus, $\nu([-a,a]) \leq \int_\R T d\nu$ for all $\nu \in \M(\R)$.

Since $\{\mu_n\}_{n \in \N}$ effectively weakly converges to $\mu$, we can now compute an $n_1 \geq n_0$ so that 
$|\int_\R T\ d\mu_n - \int_R T\ d\mu| < 1$ whenever $n \geq n_1$.

Fix $f \in C_b(\R)$.  From a c.o.-name of $f$, we can compute a c.o.-name of $f|_{[-a,a]}$.  Then, by means of Theorem 6.2.1 of \cite{Weihrauch.2000}, we can compute a $\psi \in P_\Q[-a,a]$ so that 
\[
\max\{|f(x) - \psi(x)|\ :\ x \in [-a,a]\} < \frac{2^{-(N+1)}}{1 + \int_\R T\ d\mu}.
\]
Let $n \in \N$, and suppose $n \geq n_1$.  By the construction of $T$, $\mu([-a,a]) < 1 + \int_R T\ d\mu$. However, 
since $n \geq n_1$, we also have
\begin{eqnarray*}
\mu_n([-a,a]) & \leq & \int_\R T\ d\mu_n\\
& \leq & \left|\int_\R T\ d\mu_n - \int_R T\ d\mu\right| + \int_\R T\ d\mu\\
& < & 1 + \int_\R T\ d\mu.
\end{eqnarray*}
If $\nu \in \M(\R)$, then 
\begin{eqnarray*}
\left| \int_\R (f - \psi)\ d\nu\right| & \leq & \int_{[-a,a]} |f - \psi|\ d\nu + \int_{\R\setminus [-a,a]} |f - \psi|\ d\nu\\
& \leq & \frac{2^{-(N+1)}}{1 + \int_\R T\ d\mu} \nu([-a,a]) + 2B \nu(\R\setminus [-a,a]).
\end{eqnarray*}
It follows that $|\int_R(f -\psi)\ d\mu| < 2^{-N}$ and that 
$|\int_\R (f - \psi)\ d\mu_n| < 2^{-N}$.
\end{proof}

\begin{proof}[Proof of Theorem \ref{thm:ewc.equiv}]
It is possible to compute a c.o.-name of $f \in C^c_b(\R)$ from an index of $f$.  It thus follows that 
every uniformly effectively weakly convergent sequence is effectively weakly convergent.

Now, suppose $\{\mu_n\}_{n\in\N}$ effectively weakly converges to $\mu$.  Let $B \in\N$, and suppose $\rho$ is a c.o. name of $f\in C_b(\R)$ with $|f|\leq B$.  We construct a function $G: \N \rightarrow \N$ as follows.
By means of Lemma \ref{lm:comp.psi}, we can compute $a, n_1 \in \N$ and $\psi \in P_\Q[-a,a]$ so that 
$|\int_\R (f - \psi)\ d\mu| < 2^{-(N+2)}$ and so that $|\int_\R (f - \psi)\ d\mu_n| < 2^{-(N+2)}$ when $n \geq n_1$.
Since $\{\mu_n\}_{n\in\N}$ effectively weakly converges to $\mu$, we can now compute an $n_2 \in \N$ so that 
$|\int_\R \psi\ d\mu_n - \int_R \psi\ d\mu| < 2^{-(N+1)}$ whenever $n \geq n_2$.  Set $G(N) = n_2$.

Suppose $n \geq G( N)$.  Then, 
\begin{align*}
\left|\int_{\R}fd\mu_n-\int_{\R}fd\mu\right|&\leq\left|\int_{\R}(f-\psi)d\mu_n\right|+\left|\int_{\R}\psi d\mu_n-\int_{\R}\psi d\mu\right|+\left|\int_{\R}(\psi-f)d\mu\right|\\
&<2^{-(N+2)}+2^{-(N+1)}+2^{-(N+2)}\\
&=2^{-N}.
\end{align*}
Thus, $G$ is a modulus of convergence for $\{\int_\R f\ d\mu_n\}_{n \in \N}$.   Since the construction of $G$ from $\rho$ and $B$ is uniform, $\{\mu_n\}_{n \in \N}$ uniformly effectively weakly converges to $\mu$.
\end{proof}

The corollary to Theorem \ref{thm:ewc.equiv} below follows from the observation that every uniformly effectively weakly convergent sequence in $\M(\R)$ weakly converges.

\begin{corollary}\label{cor:ewc.2.wc}
If a uniformly computable sequence of measures effectively weakly converges, then it weakly converges.
\end{corollary}

Finally, we note that the above results above allow us to distinguish the classical notion of weak convergence of measures on $\R$ from its effective versions.

\begin{example}\label{ex:not.ewc}
For $E\subseteq\R$, let $\mu_n(E)=\lambda(E\cap[0,q_n])$ for each $n$, where $\{q_n\}_{n\in\N}$ is a computable increasing sequence of rationals that converges to an incomputable left-c.e. $\alpha\in(0,1)$. Then it is easy to see that $\{\mu_n\}_{n\in\N}$ weakly converges to the measure $\mu(E)=\lambda(E\cap[0,\alpha])$ for $E\subseteq\R$. However, by Proposition \ref{prop:uew}, $\{\mu_n\}_{n\in\N}$ cannot effectively weakly converge since $\mu(\R)=\lambda(\R\cap[0,\alpha])=\lambda([0,\alpha])=\alpha$ is not computable.
\end{example}

%
%

\section{An Effective Portmanteau Theorem}

Below, we state an effective version of the classical Portmanteau Theorem (Theorem \ref{thm:cpt}).

\begin{theorem}[Effective Portmanteau Theorem]\label{thm:ept}
Let $\{\mu_n\}_{n\in\N}$ be a uniformly computable sequence in $\M(\R)$. The following are equivalent.
	\begin{enumerate}
   		\item $\{\mu_n\}_{n\in\N}$ effectively weakly converges to $\mu$.\label{thm:ept::ewc}
		
    		\item From $e,B \in \N$ so that $e$ indexes a uniformly continuous $f \in C_b(\R)$ with $|f| \leq B$, it is possible to compute a modulus of convergence of $\{\int_\R f\ d\mu_n\}_{n \in \N}$.\label{thm:ept::uc}
	
	    	\item $\mu$ is computable, and from an index of $C \in \Pi^0_1(\R)$ it is possible to compute an index of a 
		witness that $\limsup_n \mu_n(C)$ is not larger than $\mu(C)$.\label{thm:ept::clsd}
		
    		\item	$\mu$ is computable, and from an index of $U \in \Sigma^0_1(\R)$ it is possible to compute an index of 
		a witness that $\liminf_n \mu_n(U)$ is not smaller than $\mu(U)$.\label{thm:ept::opn}
		
    		\item $\mu$ is computable, and for every $\mu$-almost decidable $A$, $\lim_n \mu_n(A) = \mu(A)$ and an index of a modulus of convergence of $\{\mu_n(A)\}_{n \in \N}$ can be computed from a $\mu$-almost decidable index of $A$.\label{thm:ept::a.d.}
	\end{enumerate}
\end{theorem}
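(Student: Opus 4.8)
The plan is to prove the cycle of implications (\ref{thm:ept::ewc}) $\Rightarrow$ (\ref{thm:ept::uc}) $\Rightarrow$ (\ref{thm:ept::clsd}) $\Rightarrow$ (\ref{thm:ept::opn}) $\Rightarrow$ (\ref{thm:ept::a.d.}) $\Rightarrow$ (\ref{thm:ept::ewc}), reproducing the logical skeleton of the classical Portmanteau Theorem (Theorem \ref{thm:cpt}) while tracking indices throughout. The implication (\ref{thm:ept::ewc}) $\Rightarrow$ (\ref{thm:ept::uc}) is immediate: by Theorem \ref{thm:ewc.equiv} effective weak convergence upgrades to uniform effective weak convergence, and from an index of a computable uniformly continuous $f$ one computes a c.o.-name of $f$, so the uniform modulus procedure applies. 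In every subsequent step the limit $\mu$ is computable: this follows from Proposition \ref{prop:uew} together with Proposition \ref{prop:comp.equiv}, since condition (\ref{thm:ept::uc}) already yields computable integrals of uniformly continuous $f$ with $0 \leq f \leq 1$ (take $f \equiv 1$ to obtain, in addition, an effective handle on $\mu_n(\R) \to \mu(\R)$). The effective bridge between the $\limsup$/$\liminf$ conditions and moduli of convergence is the witness formalism of Definition \ref{def:wit.lwr.upr} and Proposition \ref{prop:wit.2.mod}: an upper witness for $\limsup$ paired with a lower witness for $\liminf$ assembles into a computable modulus.

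For (\ref{thm:ept::uc}) $\Rightarrow$ (\ref{thm:ept::clsd}) I would build the required witness $g$ directly. Given an index of a closed $C \in \Pi^0_1(\R)$, the complement $U = \R \setminus C$ is $\Sigma^0_1$ with an index computable from that of $C$, and $\mu(C) = \mu(\R) - \mu(U)$ is right-c.e. uniformly, so the right Dedekind cut of $\mu(C)$ — which must be $\dom(g)$ — is c.e. Writing $U = \bigcup_m U_m$ for the canonical increasing union of finite unions of rational intervals as in Proposition \ref{prop:comp.equiv}, for each rational $r > \mu(C)$ I would search for an $m$ with $\mu(U_m) > \mu(\R) - r$; this search halts precisely when $r > \mu(C)$, which keeps $\dom(g)$ correct. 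Then $\R \setminus U_m \supseteq C$ has $\mu$-measure below $r$, so one constructs a rational polygonal $f$ with $\one_C \leq \one_{\R \setminus U_m} \leq f$ and $\int_\R f\,d\mu < r$ by tapering into $U_m$ and shrinking the taper until the computable quantity $\int_\R f\,d\mu$ drops below $r$. Since $f$ is uniformly continuous with $|f| \leq 1$, condition (\ref{thm:ept::uc}) furnishes a modulus for $\{\int_\R f\,d\mu_n\}$, hence an $n_0$ with $\mu_n(C) \leq \int_\R f\,d\mu_n < r$ for $n \geq n_0$; set $g(r) = n_0$.

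The step (\ref{thm:ept::clsd}) $\Rightarrow$ (\ref{thm:ept::opn}) passes to complements: an index of open $U$ yields an index of the closed $C = \R \setminus U$, and from the upper witness for $\limsup_n \mu_n(C)$ one produces the lower witness for $\liminf_n \mu_n(U)$ via $\mu_n(U) = \mu_n(\R) - \mu_n(C)$ together with effective total-mass convergence. For (\ref{thm:ept::opn}) $\Rightarrow$ (\ref{thm:ept::a.d.}) I would use that a $\mu$-almost decidable pair $(U,V)$ of $A$ sandwiches $A$ between the open set $U$ and the closed set $\R \setminus V$, with $\mu(U) = \mu(\R \setminus V) = \mu(A)$; applying the open-set witnesses to $U$ and to $V$ then yields both a lower and an upper witness for $\{\mu_n(A)\}$, which Proposition \ref{prop:wit.2.mod} converts into a computable modulus. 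Finally (\ref{thm:ept::a.d.}) $\Rightarrow$ (\ref{thm:ept::ewc}) is the effective analogue of classical ``(5) $\Rightarrow$ (1)'': given a computable $f$ with $|f| \leq B$, apply condition (\ref{thm:ept::a.d.}) to a $\mu$-almost decidable tail set $\R \setminus [-a,a]$ to confine the mass, slice the range $[-B,B]$ along finitely many computable levels whose preimages under $f$ are $\mu$-almost decidable, approximate $\int_\R f\,d\mu_n$ uniformly by the resulting simple functions, and sum the per-set moduli from (\ref{thm:ept::a.d.}).

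The main obstacle is the effective duality between closed and open sets underlying (\ref{thm:ept::clsd}) $\Rightarrow$ (\ref{thm:ept::opn}) and the subsequent passage to $\mu$-almost decidable sets. Converting an upper witness for $\limsup_n \mu_n(\R \setminus U)$ into a lower witness for $\liminf_n \mu_n(U)$ relies on effective control of the total mass $\mu_n(\R) \to \mu(\R)$, and more generally on effective tightness so that mass cannot leak to infinity; Lemma \ref{lm:big.a} provides exactly this control once effective weak convergence is in hand, and the delicate point is to reproduce the needed control at each sideways step, combining the $C = \R$ and $U = \R$ instances of the witness conditions through Proposition \ref{prop:wit.2.mod}. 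A secondary difficulty, in (\ref{thm:ept::a.d.}) $\Rightarrow$ (\ref{thm:ept::ewc}), is the effective selection of slicing levels so that each preimage is genuinely $\mu$-almost decidable — its boundary carrying no $\mu$-mass — with a $\mu$-almost decidable index computable uniformly, so that the per-set moduli supplied by (\ref{thm:ept::a.d.}) can be assembled into a single modulus for $\{\int_\R f\,d\mu_n\}$.
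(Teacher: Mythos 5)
Your architecture is a single cycle (\ref{thm:ept::ewc})$\Rightarrow$(\ref{thm:ept::uc})$\Rightarrow$(\ref{thm:ept::clsd})$\Rightarrow$(\ref{thm:ept::opn})$\Rightarrow$(\ref{thm:ept::a.d.})$\Rightarrow$(\ref{thm:ept::ewc}), which is genuinely different from the paper's decomposition: the paper proves (\ref{thm:ept::ewc})$\Leftrightarrow$(\ref{thm:ept::uc}) from Theorem \ref{thm:ewc.equiv}, proves (\ref{thm:ept::ewc})$\Rightarrow$(\ref{thm:ept::opn}) directly by approximating $\one_U$ from below by computable Lipschitz functions, recovers (\ref{thm:ept::opn})$\Rightarrow$(\ref{thm:ept::ewc}) from a layer-cake lemma (Lemma \ref{lm:mod.int}) that converts $\int_\R f\,d\nu = \int_0^1 \nu(U^f_t)\,dt$ into Riemann-sum witnesses, gets (\ref{thm:ept::opn})$\Rightarrow$(\ref{thm:ept::a.d.}) by sandwiching $A$ between $U$ and $\R\setminus V$, and closes with (\ref{thm:ept::a.d.})$\Rightarrow$(\ref{thm:ept::clsd}) by thickening a basic closed superset $C_{k_0}\supseteq C$ to a $\mu$-almost decidable closed neighborhood $\ol{B}_{R_0}$ with $\mu$-null boundary. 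Your (\ref{thm:ept::uc})$\Rightarrow$(\ref{thm:ept::clsd}) via rational polygonal majorants of $\one_C$ is sound and is a legitimate alternative to the paper's route; your (\ref{thm:ept::a.d.})$\Rightarrow$(\ref{thm:ept::ewc}) by level-set slicing is plausible but underestimates one point: for the preimage pair to be $\mu$-almost decidable you need levels $t$ with $\mu(\{f=t\})=0$ \emph{and} $\{f=t\}$ nowhere dense (else $\ol{U\cup V}\neq\R$), and both exclusions must be carried out computably.

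The genuine gap is (\ref{thm:ept::clsd})$\Rightarrow$(\ref{thm:ept::opn}). Writing $\mu_n(U)=\mu_n(\R)-\mu_n(C)$ with $C=\R\setminus U$, an upper witness for $\{\mu_n(C)\}$ gives $\mu_n(C)<s$ for large $n$ and any rational $s>\mu(C)$; to conclude $\mu_n(U)>r$ you additionally need $\mu_n(\R)>r+s$ for all large $n$, \emph{effectively} --- that is, a lower witness for $\{\mu_n(\R)\}$, which is precisely the $U=\R$ instance of (\ref{thm:ept::opn}), the statement being proved. Condition (\ref{thm:ept::clsd}) with $C=\R$ supplies only the upper witness for the total masses, so your proposed fix of ``combining the $C=\R$ and $U=\R$ instances'' presupposes the conclusion. (The paper asserts the same equivalence ``by considering complements'' without elaboration, so you are in good company, but in your cycle this arrow is the sole bridge from the upper to the lower half of the theorem and cannot be left as a remark.) The same defect recurs inside your (\ref{thm:ept::opn})$\Rightarrow$(\ref{thm:ept::a.d.}): turning the lower witness for $V$ into an upper witness for $\mu_n(\R\setminus V)$ again needs an effective upper bound on $\mu_n(\R)$. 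A concrete repair is to prove (\ref{thm:ept::uc})$\Rightarrow$(\ref{thm:ept::opn}) directly by the dual of your majorant construction --- polygonal minorants $0\le f\le\one_{U_m}\le\one_U$ with $\int_\R f\,d\mu>r$, the search over $m$ halting exactly when $r$ lies in the left cut of $\mu(U)$ --- and then carry the conjunction of (\ref{thm:ept::clsd}) and (\ref{thm:ept::opn}) forward together, which also supplies the two-sided total-mass control needed at the almost-decidable step.
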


When $f \in C_b(\R)$ and $t \in \R$, let $U^f_t = \{f > t\}$, and let $\ol{U}^f_t = \{f \geq t\}$.  By a standard argument with Tonelli's Theorem, 
\[
\int_\R f\ d\nu = \int_0^1 \nu(U^f_t)\ dt = \int_0^1 \nu(\ol{U}^f_t)\ dt
\]
whenever $\nu \in \M(\R)$.

To prove Theorem \ref{thm:ept}, we will need the following lemmas.

\begin{lemma}\label{lm:sc.U}
Let $f\in C^c_b(\R)$ be so that $0<f<1$.  Fix a computable $\nu \in \M(\R)$.  
\begin{enumerate}
	\item The function $t \mapsto \nu(U^f_t)$ is lower-semicomputable uniformly in indices of $f$ and $\nu$. \label{lm:sc.U::lsc}
	\item The function $t \mapsto \nu(\ol{U}^f_t)$ is upper-semicomputable uniformly in indices of $f$ and $\nu$.
	\label{lm:sc.U::usc}
\end{enumerate}
\end{lemma}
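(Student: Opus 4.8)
The plan is to reduce both statements to the computability of integrals of nonnegative computable functions against $\nu$ (Proposition \ref{prop:comp.equiv}), thereby sidestepping the need to produce a literal $\Sigma^0_1$-index of the real-threshold superlevel set $U^f_t$. The organizing identity is the monotone-limit decomposition of a superlevel set by rational thresholds:
\[
U^f_t = \{f > t\} = \bigcup_{\substack{s \in \Q \\ s > t}} \{f > s\},
\]
an increasing union as $s \downarrow t$, so that by continuity of measure $\nu(U^f_t) = \sup\{\nu(\{f>s\}) : s \in \Q,\ s > t\}$. From a Cauchy name of $t$ the set $\{s \in \Q : s > t\}$ is c.e., so it suffices to show that, for rational $s$, $\nu(\{f>s\})$ is left-c.e. uniformly in $s$ and indices of $f$ and $\nu$.

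For the latter, I would introduce the ramp functions $r_{s,m}(x) = \min\{1,\, 2^m \max\{f(x)-s,\,0\}\}$. Each $r_{s,m}$ lies in $C^c_b(\R)$, is nonnegative, and an index for it is computable from an index of $f$ together with $s$ and $m$. As $m \to \infty$ the functions $r_{s,m}$ increase pointwise to $\one_{\{f>s\}}$, so by the Monotone Convergence Theorem $\nu(\{f>s\}) = \sup_m \int_\R r_{s,m}\,d\nu$. By Proposition \ref{prop:comp.equiv}, each $\int_\R r_{s,m}\,d\nu$ is computable uniformly, so $\nu(\{f>s\})$ is a supremum of a uniformly computable array of reals, hence left-c.e.; combining with the previous paragraph, $\nu(U^f_t) = \sup\{\int_\R r_{s,m}\,d\nu : s \in \Q,\ s>t,\ m \in \N\}$ is left-c.e. uniformly in a Cauchy name of $t$, which is exactly (\ref{lm:sc.U::lsc}).

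For (\ref{lm:sc.U::usc}) I would pass to complements. Since $\ol{U}^f_t = \{f \ge t\}$ and $\{f < t\}$ partition $\R$, we have $\nu(\ol{U}^f_t) = \nu(\R) - \nu(\{f<t\})$. The open set $\{f<t\}$ is handled exactly as above: $\{f<t\} = \bigcup_{s \in \Q,\ s<t}\{f<s\}$ is an increasing union with $\{s \in \Q : s<t\}$ c.e. from a Cauchy name of $t$, and $\one_{\{f<s\}} = \sup_m \min\{1, 2^m\max\{s - f,0\}\}$ exhibits $\nu(\{f<s\})$ as left-c.e.; hence $\nu(\{f<t\})$ is left-c.e. uniformly. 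Because $\nu$ is computable, $\nu(\R)$ is a computable real, so $\nu(\R) - \nu(\{f<t\})$ is right-c.e., i.e. $t \mapsto \nu(\ol{U}^f_t)$ is upper-semicomputable uniformly.

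The main obstacle is the real-valued threshold: one cannot decide $f(x) > t$ or $f(x) = t$ from a Cauchy name of $t$, so there is no direct way to hand the computability of $\nu$ an index of $U^f_t$. The key maneuver that removes this difficulty is to keep every approximation of the correct (left-c.e.) polarity --- approximating $U^f_t$ from within by rational thresholds $s > t$ (which are enumerable from a Cauchy name of $t$), and approximating each rational superlevel set from below by the integrals $\int_\R r_{s,m}\,d\nu$. The hypothesis $0<f<1$ is not needed for semicomputability itself; it only ensures that the relevant thresholds lie in $[0,1]$, matching the Tonelli representation used later.
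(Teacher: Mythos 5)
Your argument is correct, but it takes a different route from the paper's. The paper proves part (\ref{lm:sc.U::lsc}) by composing two cited facts: Theorem 6.2.4 of \cite{Weihrauch.2000}, which says that $t \mapsto f^{-1}[(t,\infty)]$ is computable as a map from reals to open sets (i.e.\ one can enumerate $\{i : I_i \subseteq U^f_t\}$ from a Cauchy name of $t$), and Proposition 4.2.1 of \cite{HR09b}, which says that $U \mapsto \nu(U)$ is lower semi-computable on $\Sigma^0_1$ names when $\nu$ is computable; part (\ref{lm:sc.U::usc}) is then obtained by complementation exactly as you do. You instead bypass the production of a $\Sigma^0_1$-name of $U^f_t$ altogether, writing $U^f_t$ as the increasing union $\bigcup_{s \in \Q,\, s>t}\{f>s\}$ (with $\{s : s>t\}$ c.e.\ from a Cauchy name of $t$) and exhibiting each $\nu(\{f>s\})$ as the supremum of the uniformly computable integrals $\int_\R r_{s,m}\,d\nu$ via the Monotone Convergence Theorem and Proposition \ref{prop:comp.equiv}. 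Both routes are sound and uniform; the paper's is shorter because it leans on external citations, while yours is more self-contained, relying only on results already established in the paper (and is essentially the argument one would obtain by unfolding those citations). Your closing observation that $0<f<1$ is not actually needed for the semicomputability claims, only for the later Tonelli representation, is also correct.
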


\begin{proof}
Since $U^f_t = f^{-1}[(t,\infty)]$, $t \mapsto f^{-1}[(t,\infty)]$ is computable by Theorem 6.2.4 in \cite{Weihrauch.2000}.  That is, there is a Turing functional that computes an enumeration of $\{i\ :\ I_i \subseteq U^f_t\}$ from 
a Cauchy name of $t$.  It follows from Proposition 4.2.1 of \cite{HR09b} that $U \mapsto \nu(U)$ is lower semi-computable.  
That is, there is a Turing functional that for each open $U \subseteq \R$ computes an enumeration of the left Dedekind cut of $\nu(U)$ from an enumeration of $\{i\ :\ I_i \subseteq U\}$.  Thus, (\ref{lm:sc.U::lsc}).\\

By similar reasoning, $t \mapsto \nu (f^{-1}[(-\infty, t)])$ is lower semi-computable.  Since 
$\nu(\ol{U}_t^f) = \nu(\R) - \nu(f^{-1}[(-\infty,t)])$, it follows that $t\mapsto \nu(\ol{U}^f_t)$ is upper semi-computable.  Thus, (\ref{lm:sc.U::usc}).\\

By inspection, these arguments are uniform in indices of $\nu$ and $f$.
\end{proof}

\begin{lemma}\label{lm:mod.int}
Let $\{\mu_n\}_{n\in\N}$ be a uniformly computable sequence in $\M(\R)$ that weakly converges to a computable measure $\mu$.  Furthermore, let $f\in C_b^c(\R)$ be so that $0<f<1$.
\begin{enumerate}
	\item Suppose that from an index of $U \in \Sigma^0_1$ it is possible to compute an index of a witness that 
	$\liminf_n \mu_n(U)$ is not smaller than $\mu(U)$.  Then, there is a computable witness that 
	$\liminf_n \int_\R f\ d\mu_n$ is not smaller than $\int_\R f\ d\mu$.
	\label{lm:mod.int::lower}
	
	\item Suppose that from an index of $C \in \Pi^0_1$ it is possible to compute an index of a witness that 
	$\limsup_n \mu_n(C)$ is not larger than $\mu(C)$.  Then, there is a computable witness that 
	$\limsup_n \int_\R f\ d\mu_n$ is not larger than $\int_\R f\ d\mu$.
	\label{lm:mod.int::upper}
\end{enumerate}
\end{lemma}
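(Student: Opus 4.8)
The plan is to exploit the layer-cake representation $\int_\R f\,d\nu = \int_0^1 \nu(U^f_t)\,dt = \int_0^1 \nu(\ol{U}^f_t)\,dt$ recorded above, together with the fact that $t \mapsto \nu(U^f_t)$ and $t \mapsto \nu(\ol{U}^f_t)$ are non-increasing. Because these integrands are bounded and monotone, their Riemann sums over a rational partition of $[0,1]$ are monotone approximations to the integral: a right-endpoint sum underestimates $\int_0^1 \nu(U^f_t)\,dt$, while a left-endpoint sum overestimates $\int_0^1 \nu(\ol{U}^f_t)\,dt$. I will prove (\ref{lm:mod.int::lower}) by approximating $\int_\R f\,d\mu$ from below through the open sets $U^f_t$, and prove (\ref{lm:mod.int::upper}) symmetrically by approximating from above through the closed sets $\ol{U}^f_t$; the two arguments are mirror images, so I describe (\ref{lm:mod.int::lower}) in detail.

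For (\ref{lm:mod.int::lower}), I must produce a computable $g : \subseteq \Q \to \N$ whose domain is the left Dedekind cut of $\int_\R f\,d\mu$ and with $r < \int_\R f\,d\mu_n$ for all $n \geq g(r)$. Fix a rational $r$. I run a dovetailing search over all finite rational partitions $0 = t_0 < \cdots < t_m = 1$ and all tuples of rationals $(s_j)_{j=1}^m$ with $s_j < \mu(U^f_{t_j})$ --- the latter enumerable because $t \mapsto \mu(U^f_t)$ is lower-semicomputable by Lemma \ref{lm:sc.U} --- seeking a configuration with $\sum_{j=1}^m s_j(t_j - t_{j-1}) > r$. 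Each $U^f_{t_j}$ has a $\Sigma^0_1$-index computable from the index of $f$ and $t_j$, so from the hypothesis I can compute an index of a witness $g_j$ that $\liminf_n \mu_n(U^f_{t_j})$ is not smaller than $\mu(U^f_{t_j})$. When the search halts, I set $g(r) = \max_{1 \le j \le m} g_j(s_j)$; this is defined since $s_j < \mu(U^f_{t_j})$ puts $s_j$ in $\dom(g_j)$. For $n \geq g(r)$ each $\mu_n(U^f_{t_j}) > s_j$, and monotonicity of $t \mapsto \mu_n(U^f_t)$ gives
\[
\int_\R f\,d\mu_n = \int_0^1 \mu_n(U^f_t)\,dt \geq \sum_{j=1}^m \mu_n(U^f_{t_j})(t_j - t_{j-1}) > \sum_{j=1}^m s_j(t_j - t_{j-1}) > r,
\]
as required.

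Part (\ref{lm:mod.int::upper}) follows the same template with the roles reversed: given a rational $r$, I search for a rational partition and rationals $u_j > \mu(\ol{U}^f_{t_{j-1}})$ (enumerable since $t \mapsto \mu(\ol{U}^f_t)$ is upper-semicomputable by Lemma \ref{lm:sc.U}) with $\sum_{j=1}^m u_j(t_j - t_{j-1}) < r$; each $\ol{U}^f_{t_{j-1}}$ has a computable $\Pi^0_1$-index, so the hypothesis yields witnesses $g_j$ that $\limsup_n \mu_n(\ol{U}^f_{t_{j-1}})$ is not larger than $\mu(\ol{U}^f_{t_{j-1}})$, and $g(r) = \max_j g_j(u_j)$ works because the left-endpoint upper sum dominates $\int_0^1 \mu_n(\ol{U}^f_t)\,dt = \int_\R f\,d\mu_n$.

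The step I expect to carry the real weight is verifying that each search terminates exactly when $r$ lies strictly on the correct side of $\int_\R f\,d\mu$, so that $\dom(g)$ is precisely the required Dedekind cut. For (\ref{lm:mod.int::lower}) this rests on two facts: every rational right-endpoint lower sum is strictly below $\int_0^1 \mu(U^f_t)\,dt$ (since $s_j < \mu(U^f_{t_j})$ and right-endpoint sums underestimate a non-increasing integrand), so the search cannot succeed when $r \geq \int_\R f\,d\mu$; and conversely, when $r < \int_\R f\,d\mu$, Riemann integrability of the monotone integrand lets me refine the partition until the true lower sum exceeds $r$ with room to spare, after which sufficiently tight rational choices $s_j$ keep the rational sum above $r$, so the search succeeds. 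The analogous strict-separation facts for upper sums handle (\ref{lm:mod.int::upper}). All constructions are uniform in the given indices, which is all that the lemma asserts.
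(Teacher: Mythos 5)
Your proposal is correct and follows essentially the same route as the paper's proof: the layer-cake representation, the monotonicity and semi-computability of $t \mapsto \nu(U^f_t)$ and $t \mapsto \nu(\ol{U}^f_t)$ from Lemma \ref{lm:sc.U}, a c.e.\ search for a rational Riemann-sum configuration certifying the relevant strict inequality, and taking the maximum of the hypothesized witnesses over the partition points. The only (cosmetic) difference is that you search over arbitrary finite rational partitions of $[0,1]$ where the paper restricts to dyadic partitions into $2^k$ equal subintervals.
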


\begin{proof}
(\ref{lm:mod.int::lower}): Let $\phi(t)=\mu(U^f_t)$ for all $t\in[0,1]$.  Thus, $\phi$ is non-increasing.  
Let $L$ denote the set of all tuples $(k,v_0,...,v_{2^k-1})$ so that $k\in\N$, $v_j\in\Q$, and $v_j<\phi((j+1)2^{-k})$ whenever $j < 2^k$.  Since $\phi$ is lower-semicomputable, $L$ is $L$ is c.e..  We note that if $(k, v_0, \ldots, v_{2^k - 1}) \in L$, then 
$v_j < \phi(t)$ whenever $j\cdot2^{-k} \leq t \leq (j+1)\cdot2^{-k}$.  We also note that if $(k, v_0, \ldots, v^{2^k - 1}) \in L$, then $\int_0^1 \phi(t)\ dt > 2^{-k} \sum_j v_j$.

We claim $\int_0^1\phi(t)dt>r$ if and only if there exists $(k,v_0,...,v_{2^k-1})\in L$ so that $\sum_{j<2^k}v_j>r2^k$. 
It suffices to prove the forward direction. 
Suppose $\int_0^1\phi(t)dt>r$. 
Since $\phi$ is non-increasing, $\phi$ is Riemann integrable. 
Let $\epsilon=2^{-1}\left(\int_0^1\phi(t)dt -r\right)$. 
Then, there exists $\delta>0$ so that $\left|S-\int_0^1\phi(t)dt\right|<\epsilon$ whenever $S$ is a Riemann sum for $\phi$ whose partition has width smaller than $\delta$. 
Let $k \in \N$ so that $2^{-k}<\delta$. 
Thus, 
\[
\left|\sum_{j<2^k} \phi((j+1)2^{-k}) 2^{-k} - \int_0^1\phi(t)dt\right|<\epsilon.
\]
Therefore, $\sum_{j<2^k} \phi((j+1)2^{-k}) 2^{-k}>r$. 
There exist rational numbers $v_0,...,v_{2^k-1}$ so that $v_j\leq \phi((j+1)2^{-k})$ and $\sum_{j<2^k} v_j 2^{-k}>r$. 
Thus, $(k,v_0,...,v_{2^k-1})\in L$. 

We now define a computable function $g : \subseteq \Q \rightarrow \N$.   
Given $r\in\Q$, search for $(k,v_0,...,v_{2^k-1}) \in L$ so that $\sum_{j<2^k} v_j 2^{-k}>r$. 
Then, for each $j < 2^k$, compute a witness $h_j$ that $\liminf_n \mu_n(U^f_{(j+1)2^{-k}})$ is not smaller than $\mu(U^f_{(j+1)2^{-k}})$ for $\{\mu_n(U^f_{(j+1)2^{-k}})\}_{n \in \N}$.
Set $g(r) = \max\{h_j(v_j)\ :\ j < 2^k\}$.  

Since $L$ is c.e., $g$ is a computable partial function. 
By what has just been shown, $g(r)\halts$ if and only if $\int_0^1\phi(t)dt>r$. 
Suppose $\int_0^1\phi(t)dt > r$ and $n\geq g(r)$. 
Then, $\mu_n(U^f_{j\cdot2^{-k}})>v_j$. 
Thus, $\mu_n(U^f_t)>v_j$ whenever $j \cdot 2^{-k} \leq t \leq (j+1)2^{-k}$. 
Hence, $\int_0^1\mu_n(U^f_t)dt > \sum_{j<2^k} v_j2^{-k} > r$.  
Therefore, $g$ witnesses that $\liminf_n \int_0^1 \mu_n(U^f_t)\ dt$ is not smaller than $\int_0^1 \mu(U^f_t)\ dt$.

(\ref{lm:mod.int::upper}):
 Let $\psi(t)=\mu(\ol{U}^f_t)$ for all $t\in[0,1]$.  Thus, $\psi$ is non-decreasing.
Let $H$ denote the set of all tuples $(k,u_0,...,u_{2^k-1})$ so that $k\in\N$, $u_j\in\Q$, and $u_j>\psi(j\cdot2^{-k})$ whenever $j\cdot2^{-k}\leq t\leq(j+1)2^{-k}$.  
Since $\psi$ is upper semi-computable, $H$ is c.e..  We note that if $(k,u_0,...,u_{2^k-1})\in H$, then 
$u_j > \psi(t)$ whenever $j\cdot 2^{-k} \leq t \leq (j+1) \cdot 2^{-k}$.  

We now claim $\int_0^1\psi(t)dt<r$ if and only if there exists $(k,u_0,...,u_{2^k-1})\in H$ so that $\sum_{j<2^k}u_j<r2^k$. 
The proof is a minor modification of the proof of the analogous claim in the proof of (\ref{lm:mod.int::lower}).  
The construction of the required witness now proceeds along the lines of the definition of $g$ in the proof of 
(\ref{lm:mod.int::lower}).
\end{proof}

We note that the proof of Lemma \ref{lm:mod.int} is uniform.

\begin{proof}[Proof of Theorem \ref{thm:ept}]
By Theorem \ref{thm:ewc.equiv}, (\ref{thm:ept::ewc}) implies (\ref{thm:ept::uc}).  The equivalence of (\ref{thm:ept::clsd}) and 
(\ref{thm:ept::opn}) follows by considering complements.\\

(\ref{thm:ept::uc}) $\Rightarrow$ (\ref{thm:ept::ewc}): Assume (\ref{thm:ept::uc}) holds. 
By Proposition \ref{prop:comp.equiv}, $\mu$ is a computable measure. 
Since rational polygonal functions are uniformly continuous, 
it follows from the proof of Theorem \ref{thm:ewc.equiv} that $\{\mu_n\}_{n \in \N}$ is uniformly effectively weakly convergent.  
Thus, by Theorem \ref{thm:ewc.equiv}, it follows that $\{\mu_n\}_{n\in\N}$ is effectively weakly convergent.\\

(\ref{thm:ept::ewc})$\Rightarrow$ (\ref{thm:ept::opn}): Assume (\ref{thm:ept::ewc}) holds. 
By Proposition \ref{prop:uew}, $\mu$ is computable.   

Let $U \in \Sigma^0_1(\R)$.  
We construct a function $g$ as follows.  
Given $r \in \Q$, first wait until $r$ is enumerated into the left Dedekind cut of $\mu(U)$.  
By means of Proposition C.7 of \cite{G05}, we can now compute a non-decreasing sequence $\{t_k\}_{k \in \N}$ of Lipschitz functions so that $0 \leq t_k \leq 1$ and so that $\lim_k t_k = \one_U$.  
 By the Monotone Convergence Theorem, $\lim_k \int_\R t_k\ d\mu = \mu(U)$.  Search for $k_0$ so that $\int_\R t_{k_0}\ d\mu > r$.  We then compute $N_0, n_0 \in \N$ so that 
$r + 2^{-N_0} < \int_\R t_{k_0}\ d\mu$ and 
$|\int_\R t_{k_0}\ d\mu - \int_\R t_{k_0}\ d\mu_n| < 2^{-N_0}$ when $n \geq n_0$.  Set $g(r) = n_0$.  Thus, when 
$n \geq g(r)$, $\mu_n(U) \geq \int_\R t_{k_0}\ d\mu_n > r$.  Therefore, $g$ witnesses that $\liminf_n \mu_n(U)$ is not smaller than $\mu(U)$.

Finally, we note that the construction of $g$ is uniform in that an index of $g$ can be computed from an index of $U$.  Thus, (\ref{thm:ept::opn}).\\

(\ref{thm:ept::opn}) $\Rightarrow$ (\ref{thm:ept::ewc}): Suppose (\ref{thm:ept::opn}).  
Thus, (\ref{thm:ept::clsd}).   

Fix $f \in C^c_b(\R)$, and suppose $B \in \N$ is an upper bound on $|f|$.  Set 
\[
h = \frac{f + B + 1}{2(B+1)}.
\]
Thus, $0 < h < 1$.  

Let $a_n = \int_\R h\ d\mu_n$.  
Let $a = \int_\R h\ d\mu$.  
By Lemma \ref{lm:mod.int}, there is a computable witness that 
$\liminf_n a_n$ is not smaller than $a$, and there is a computable witness that 
$\limsup_n a_n$ is not larger than $a$.  Thus, by Proposition \ref{prop:wit.2.mod}, $\lim_n a_n = a$ and 
$\{a_n\}_{n \in \N}$ has a computable modulus of convergence.  It follows that 
$\{\int_R f\ d\mu_n\}_{n \in \N}$ has a computable modulus of convergence.

The argument just given is uniform, and so we conclude $\{\mu_n\}_{n \in \N}$ effectively weakly converges to $\mu$.\\ 

(\ref{thm:ept::opn}) $\Rightarrow$ (\ref{thm:ept::a.d.}): Suppose (\ref{thm:ept::opn}).   Thus, (\ref{thm:ept::clsd}). 

Suppose $A$ is $\mu$-almost decidable.  Let $(U,V)$ be a $\mu$-almost decidable pair for $A$.  
Thus, $\mu(U) = \mu(A) = \mu(\R \setminus V)$.  

By (\ref{thm:ept::opn}), there is a computable witness that $\liminf_n \mu_n(U)$ is not smaller than $\mu(A)$; let $g_1$ be such a witness.  By (\ref{thm:ept::clsd}), there is a computable witness that $\limsup_n \mu_n(\R \setminus V)$ is not larger than $\mu(A)$; let $g_2$ be such a witness.

Since $\mu_n(U) \leq \mu_n(A) \leq \mu_n(\R \setminus V)$, $g_1$ is also a witness that $\liminf_n \mu_n(A)$ is not smaller than $\mu(A)$, and $g_2$ is also a witness that $\limsup_n \mu_n(A)$ is not larger than $\mu(A)$.  
So, by Proposition \ref{prop:wit.2.mod}, $\{\mu_n(A)\}_{n \in A}$ has a computable modulus of convergence $g$.   

The argument just given is uniform in that an index of $g$ can be computed from an index of $A$.  Hence, (\ref{thm:ept::a.d.}). \\

(\ref{thm:ept::a.d.}) $\Rightarrow$ (\ref{thm:ept::clsd}): Suppose (\ref{thm:ept::a.d.}).  Thus, $\mu$ is computable.  
Let $C \in \Pi^0_1(\R)$.  Set
\[
C_k = \bigcap \{\R \setminus I_i\ :\ i \leq k\ \wedge\ I_i \cap C = \emptyset\}.
\]
Thus, $C_k \supseteq C_{k+1}$ and $\bigcap_k C_k = C$.  Therefore, by continuity of measure, 
$\nu(C) = \lim_k \nu(C_k)$ for all $\nu \in \M(\R)$. 
Since $\mu$ is computable, $k \mapsto \mu(C_k)$ is upper semi-computable.  

We construct a function $g$ as follows.  Let $r \in \Q$.  Wait until $r$ is enumerated into the right Dedekind cut of $\mu(C)$.  Then, compute $k_0$ so that $\mu(C_{k_0}) < r$.  

For every $R > 0$, let:
\begin{eqnarray*}
\ol{B}_R & = & \{x \in \R\ :\ d(x, C_{k_0}) \leq R\}\\
C_R & =& \{x \in \R\ :\ d(x, C_R) = R\}
\end{eqnarray*}
By a standard counting argument, for every open interval $I \subseteq (0, \infty)$, there exists $R \in I$ so that 
$\mu(C_R) = 0$.  Also, $C_R \supseteq \partial \ol{B}_R$.  From a Cauchy name of $R > 0$, it is possible to compute an enumeration of $\{i\ :\ I_i \subseteq \R\setminus\ol{B}_R\}$.  

Since the boundary of $C_{k_0}$ consists of finitely many rational numbers, from a Cauchy name of a positive real $R$, it is possible to compute an enumeration of 
$\{i\ :\ I_i \subseteq \R\setminus C_R\}$.  Hence, $R \mapsto \mu(C_R)$ is upper semi-computable.

So, given an open rational interval $I \subseteq (0, \infty)$, it is possible to compute $R \in I$ so that $\mu(C_R) = 0$.  
Thus, we can compute $R_0 >0$ so that $\mu(\ol{B}_{R_0}) < r$ and $\mu(C_{R_0}) = 0$.  Hence, $\mu(\partial B_{R_0})) = 0$.  We then compute $N_0 \in \N$ so that $2^{-N_0} < r - \mu(\ol{B}_{R_0})$.  

$\ol{B}_{R_0}$ is a finite union of pairwise disjoint closed intervals and singletons.  The endpoints of these intervals are computable as are these singletons.  Furthermore, their $\mu$-measure is $0$.  Thus, $\ol{B}_{R_0}$ is $\mu$-almost decidable.  
So, we can now compute $n_0 \in \N$ so that $|\mu_n(\ol{B}_{R_0}) - \mu(\ol{B}_{R_0})| < 2^{-N_0}$ when $n \geq n_0$.  Set $g(r) = n_0$.  Thus, when $n \geq g(r)$, $\mu_n(\ol{B}_{R_0}) < r$ and so $\mu_n(C) \leq \mu_n(C_{k_0}) \leq \mu_n(\ol{B}_{R_0}) < r$.  Thus, $g$ is a witness that $\limsup_n \mu_n(C)$ is not larger than $\mu(C)$.

We note that an index of $\ol{B}_{R_0}$ as a $\mu$-almost decidable set can be computed from an index of $C$.  Thus, the construction of $g$ from an index of $C$ is uniform.
\end{proof}

\section{Conclusion}

We introduced two effective notions of weak convergence of measures in $\M(\R)$.  
In the first, moduli of convergence are produced for computable functions in $C_b(\R)$.   In the second, 
moduli of convergence are produced for all functions in $C_b(\R)$ via c.o. names.  While the second appears more powerful, Theorem \ref{thm:ewc.equiv} demonstrates that the two are in fact equivalent.
By means of this equivalence, we proved an effective version of the Portmanteau Theorem. 
Altogether, Theorems \ref{thm:ewc.equiv} and \ref{thm:ept} provide a broad characterization of effective weak convergence in $\M(\R)$. 

We could have also started our development by defining effective weak convergence as effective convergence in the Prokhorov metric which is known to yield a computable metric space on $\mathcal{P}(\R)$ \cite{HR09b}.  
However, such a definition would not provide much utility for the development of an effective theory of weak convergence unless first proven equivalent to the conditions we have set forth.  This equivalence will be investigated in a forthcoming paper.  

\bibliographystyle{amsplain}
\bibliography{ourbib}

\providecommand{\bysame}{\leavevmode\hbox to3em{\hrulefill}\thinspace}
\providecommand{\MR}{\relax\ifhmode\unskip\space\fi MR }
\providecommand{\MRhref}[2]{%
  \href{http://www.ams.org/mathscinet-getitem?mr=#1}{#2}
}
\providecommand{\href}[2]{#2}
\begin{thebibliography}{10}

\bibitem{A41}
A.~Alexandroff, \emph{Additive set-functions in abstract spaces},
  Matematicheskii Sbornik \textbf{9} (1941), no.~52, 563--628.

\bibitem{BBRY11}
I.~Binder, M.~Braverman, C.~Rojas, and M.~Yampolsky, \emph{Computability of
  {B}rolin-{L}yubich measure}, Communications in Mathematical Physics
  \textbf{308} (2011), 743--771.

\bibitem{CL18}
A.~Case and J.~H. Lutz, \emph{Mutual dimension and random sequences},
  Theoretical Computer Science \textbf{731} (2018), 68--87.

\bibitem{Cooper.2004}
S.~B. Cooper, \emph{Computability theory}, Chapman \& Hall/CRC, Boca Raton, FL,
  2004.

\bibitem{Durrett.2010}
R.~Durrett, \emph{Probability: theory and examples}, fourth ed., Cambridge
  Series in Statistical and Probabilistic Mathematics, vol.~31, Cambridge
  University Press, Cambridge, 2010.

\bibitem{G05}
P.~G\'{a}cs, \emph{Uniform test of algorithmic randomness over a general
  space}, Theoretical Computer Science \textbf{341} (2005), no.~1, 91 -- 137.

\bibitem{GHR09}
S.~Galatolo, M.~Hoyrup, and C.~Rojas, \emph{A constructive {B}orel-{C}antelli
  lemma: constructing orbits with required statistical properties}, Theoretical
  Computer Science \textbf{410} (2009), 2207--2222.

\bibitem{H14}
B.~Hellouin~de Menibus, \emph{Asymptotic behaviour of cellular automata :
  computation and randomness}, {PhD} thesis, Aix-Marseille Universit\'{e},
  2014.

\bibitem{H11}
M.~Hoyrup, \emph{Randomness and the ergodic decomposition}, Models of
  computation in context, Lecture Notes in Comput. Sci., vol. 6735, Springer,
  Heidelberg, 2011, pp.~122--131.

\bibitem{HR09b}
M.~Hoyrup and C.~Rojas, \emph{Computability of probability measures and
  {M}artin-{L}\"of randomness over metric spaces}, Information and Computation
  \textbf{207} (2009), 830--847.

\bibitem{LKN16}
G.~S. Lo, M.~Ngom, and T.~A. Kpanzou, \emph{Weak {C}onvergence {(IA)}.
  {S}equences of {R}andom {V}ectors}, arXiv \textbf{abs/1610.05415} (2016).

\bibitem{P56}
Y.~Prokhorov, \emph{Convergence of random processes and limit theorems in
  probability theory}, Theory of Probability and Its Applications \textbf{1}
  (1956), 157--214.

\bibitem{R16}
J.~Rute, \emph{Computable randomness and betting for computable probability
  spaces}, Mathematical Logic Quarterly \textbf{62} (2016), no.~4-5, 335--366.

\bibitem{R18}
\bysame, \emph{On the close interaction between algorithmic randomness and
  constructive/computable measure theory}, arXiv \textbf{abs/1812.03375}
  (2018).

\bibitem{T37}
A.~M. Turing, \emph{{On Computable Numbers, with an Application to the
  Entscheidungsproblem}}, Proceedings of the London Mathematical Society
  \textbf{s2-42} (1937), no.~1, 230--265.

\bibitem{Weihrauch.2000}
K.~Weihrauch, \emph{Computable analysis}, Texts in Theoretical Computer
  Science. An EATCS Series, Springer-Verlag, Berlin, 2000.

\end{thebibliography}

\end{document}